\numberwithin{equation}{section}
\newcommand{\ma}{\mathcal}
\newcommand{\mf}{\mathfrak}
\newcommand{\m}{\CMcal}
\theoremstyle{plain}
\newtheorem{theorem}{Theorem}[section]
\newtheorem{lemma}[theorem]{Lemma}
\newtheorem{proposition}[theorem]{Proposition}
\newtheorem{definition}[theorem]{Definition}
\begin{document}

\begin{center}
 \Large\bf{A covariant Stinespring type theorem for $\tau$-maps}
\end{center}

\vspace{0.25cm}

\begin{center} HARSH TRIVEDI
\end{center}

\vspace{0.25cm}

     \begin{abstract}
Let $\tau$ be a linear map from a unital $C^*$-algebra $\m A$ to a von Neumann algebra $\m B$ and 
let $\m C$ be a unital $C^*$-algebra. 
A map $T$ from a Hilbert $\m A$-module $E$ to a von Neumann $\m C$-$\m B$ module $F$ is called a $\tau$-map
if
$$\langle T(x),T(y)\rangle=\tau(\langle x, y\rangle)~\mbox{for
all}~x,y\in E.$$ A Stinespring type theorem for $\tau$-maps and its
covariant version are obtained when $\tau$ is completely positive. We show that
there is a bijective correspondence between the set of all 
$\tau$-maps from $E$ to $F$ which are $(u',u)$-covariant with respect to a dynamical system
$(G,\eta,E)$ and the set of all $(u',u)$-covariant $\widetilde{\tau}$-maps from
the crossed product $E\times_{\eta} G$ to $F$, where $\tau$ and $\widetilde{\tau}$ are
completely positive.

\vspace{0.5cm}
\noindent {\bf AMS 2010 Subject Classification:} Primary: 46L08,~46L55; Secondary: 46L07,~46L53.\\
\noindent {\bf Key words:} Stinespring representation; completely
positive maps; von Neumann modules; dynamical systems.

  \end{abstract}

\vspace{0.5cm}

\section{Introduction}

A linear mapping $\tau$ from a (pre-)$C^*$-algebra $\m A$ to a (pre-)$C^*$-algebra $\m B$ is called {\it completely positive} if
$$\sum_{i,j=1}^n b_j^{*}
\tau(a_j^{*}a_i)b_i\geq 0$$
for each $n\in \mathbb{N}$, $b_1,b_2,\ldots,b_n\in\m
B$ and $a_1,a_2,\ldots,a_n\in\m A$. The completely positive maps are used significantly in the theory of measurements, quantum mechanics, 
operator algebras etc. Paschke's Gelfand-Naimark-Segal (GNS)
construction (cf. Theorem 5.2, \cite{Pas73}) characterizes completely positive maps between unital $C^*$-algebras, which is an abstraction of
the Stinespring's theorem for operator valued completely 
positive maps (cf. Theorem 1, \cite{St55}). Now we define Hilbert $C^*$-modules which are a generalization of Hilbert spaces and $C^*$-algebras, were introduced
by Paschke in the paper mentioned above and were also studied independently by Rieffel in \cite{Ri74}. 
\begin{definition}
Let $\m B$ be a (pre-)$C^*$-algebra and ${E}$ be a vector space
which is a right $\m B$-module satisfying $\alpha(xb)=(\alpha
x)b=x(\alpha b)$ for $x\in {E},b\in \m B,\alpha\in\mathbb{C}$. The
space ${E}$ is called an {\rm inner-product $\m B$-module} or a {\rm
pre-Hilbert $\m B$-module} if there exists a mapping $\langle
\cdot,\cdot \rangle : E \times E \to \m{B}$ such that
\begin{itemize}
 \item [(i)] $\langle x,x \rangle \geq 0 ~\mbox{for}~ x \in {E}  $ and $\langle x,x \rangle = 0$ only if $x = 0 ,$
\item [(ii)] $\langle x,yb \rangle = \langle x,y \rangle b ~\mbox{for}~ x,y \in {E}$ and  $~\mbox{for}~ b\in \m B,  $
 \item [(iii)]$\langle x,y \rangle=\langle y,x \rangle ^*~\mbox{for}~ x ,y\in {E} ,$
\item [(iv)]$\langle x,\mu y+\nu z \rangle = \mu \langle x,y \rangle +\nu \langle x,z \rangle ~\mbox{for}~ x,y,z \in {E} $ and for $\mu,\nu \in 
\mathbb{C}$.
\end{itemize}
 An inner-product $\m B$-module ${E}$
which is complete with respect to the norm  $$\| x\| :=\|\langle
x,x\rangle\|^{1/2} ~\mbox{for}~ x \in {E}$$ is called a
{\rm Hilbert $\m B$-module} or {\rm Hilbert $C^{*}$-module
over $\m B$}. It is said to be {\rm full} if the closure of the
linear span of $\{\langle x,y\rangle:x,y\in{E}\}$ equals $\m B$.
\end{definition}
Hilbert $C^*$-modules are important objects
to study the classification theory of $C^*$-algebras, the
dilation theory of semigroups of completely positive maps, and so on. 
If a completely positive map takes values in any von Neumann algebra, then it gives us a von Neumann module by 
Paschke's GNS construction (cf. \cite{Sk01}). The von Neumann modules were recently utilized  in
\cite{BSu13} to explore Bures distance between two completely
positive maps. Using the following definition of adjointable maps we define von Neumann modules: Let $E$ and $ F$ be (pre-)Hilbert $\m
A$-modules, where $\m A$ is a (pre-)$C^*$-algebra. A map $S:E\to F$
is called {\it adjointable} if there exists a map $S': F\to E$ such
that
 \[
 \langle S (x),y\rangle =\langle x,S'(y) \rangle~\mbox{for all}~x\in E, y\in  F.
 \]
$S'$ is unique for each $S$, henceforth we denote it by $S^{*}$. We denote
the set of all adjointable maps from $E$ to $ F$ by $\ma B^a (E,F)$
and we use $\ma B^a (E)$ for $\ma B^a (E,E)$. Symbols $\ma B(E, F)$
and $\ma B^r (E,F)$ represent the set of all bounded linear maps from $E$ to $ F$ and the set of all bounded 
right linear maps from $E$ to $F$, respectively.  

\begin{definition}(cf. \cite{Sk00})
Let $\m B$ be a von Neumann algebra acting on a Hilbert space $\m
H$, i.e., strongly closed $C^*$-subalgebra of $\ma B(\m H)$
containing the identity operator. Let $E$ be a (pre-)Hilbert $\m
B$-module. The Hilbert space $E\bigodot \m H$ is the interior tensor
product of $E$ and $\m H$. For each $x\in E$ we get a bounded linear
map from $\m H$ to $E\bigodot \m H$ defined as
$$L_x (h):=x\odot h~\mbox{for all}~ h\in \m H.$$
Note that $L^*_{x_1} L_{x_2} =\langle x_1,x_2\rangle~\mbox{for}~
x_1,x_2\in E.$ So we identify each $x\in E$ with $L_x$ and consider
$E$ as a concrete submodule of $\ma B(\m H,E\bigodot \m H)$. The module $E$ is called a {\rm von Neumann $\m B$-module} or a {\rm von Neumann
module over $\m B$} if $E$ is strongly closed in $\ma B(\m
H,E\bigodot \m H)$. Let $\m A$ be a unital (pre-)$C^*$-algebra. A
von Neumann $\m B$-module $E$ is called a {\rm von Neumann $\m
A$-$\m B$ module} if there exists an adjointable left action of $\m
A$ on $E$.
\end{definition}

An alternate approach to the theory of von Neumann modules 
is introduced recently in  \cite{BMSS12} and an analogue of the Stinespring's theorem for
von Neumann bimodules is discussed. The comparison of results coming from these two approach is provided by \cite{Ske12}.

Let $G$ be a locally compact group and let $M(\m A)$ denote the multiplier algebra of any
$C^*$-algebra $\m A$. An {\it action of $G$ on $\m A$} is defined as
a group homomorphism $\alpha:G\to Aut(\m A)$. If $t\mapsto
\alpha_{t}(a)$ is continuous for all $a\in\m A$, then we call
$(G,\alpha,\m A)$ a {\it $C^*$-dynamical system}. 
\begin{definition}\label{def3}(cf. \cite{Kap93})
Let $\m A$, $\m B$ be unital (pre-)$C^*$-algebras and $G$ be a locally compact group. Let $(G,\alpha,\m A)$ be a $C^*$-dynamical system and 
$u:G\to \m U\m B$
be a unitary representation where $\m U \m
B$ is the group of all unitary elements of $\m B$. A
completely positive map $\tau:\m A\to\m B$ is called {\rm
$u$-covariant} with respect to $(G,\alpha,\m A)$ if
\[
\tau(\alpha_{t}(a))=u_{t}\tau(a)u^{*}_t~\mbox{for all}~a\in\m
A~\mbox{and}~t\in G.
\]

\end{definition}

The existence of covariant completely
positive liftings (cf. \cite{CE76}) and a covariant version of the Stinespring's
theorem for operator-valued $u$-covariant completely positi-
ve maps were obtained by Paulsen in \cite{Pau82}, and they were used to provide three
groups out of equivalence classes of covariant extensions. Later
Kaplan (cf. \cite{Kap93}) extended this covariant version and as an
application analyzed the completely positive lifting problem for
homomorphisms of the reduced group $C^*$-algebras. 



A map ${T}$ from a (pre-)Hilbert $\m A$-module ${E}$ to a (pre-)Hilbert
$\m B$-module ${F}$ is called {\it $\tau$-map} (cf. \cite{SSu14}) if
$$\langle T(x),T(y)\rangle=\tau(\langle x,y\rangle)~\mbox{for all} ~x,y\in{E}.$$  Recently a Stinespring type theorem for $\tau$-maps was
obtained by Bhat, Ramesh and Sumesh (cf. \cite{BRS12}) for any operator valued completely positive map $\tau$ defined on a unital $C^*$-algebra.
There are two covariant versions of this Stinespring type theorem see Theorem 3.4 of \cite{Jo11} and Theorem 3.2 of \cite{HJ11}. In Section \ref{sec1.2},
we give a Stinespring type theorem for
$\tau$-maps, when $\m B$ is any von Neumann algebra and $F$ is any von Neumann $\m B$-module. 

In \cite{DH14} the notion of 
$\mathfrak K$-families is introduced, which is a generalization of the $\tau$-maps, and 
several results are derived for covariant $\mathfrak K$-families. In \cite{SSu14} different characterizations of the $\tau$-maps were obtained and as an application the dilation theory of semigroups of the completely positive maps was discussed. Extending some of these results for $\mathfrak K$-families, application to the dilation theory of semigroups of 
completely positive definite kernels is explored in \cite{DH14}.  

In this article we get a covariant version 
of our Stinespring type theorem
which requires the following notions: Let $\m A$ and $\m B$ be
$C^*$-algebras, $E$ be a Hilbert $\m A$-module, and let $ F$ be a
Hilbert $\m B$-module. A map $\Psi:E\to F$  is said to be a {\it
morphism of Hilbert $C^*$-modules} if there exists a $C^*$-algebra
homomorphism $\psi:\m A\to\m B$ such that
$$\langle \Psi(x),\Psi(y)\rangle=\psi(\langle
x,y\rangle)~\mbox{for all}~ x,y\in E.$$ If $E$ is full, then $\psi$
is unique for $\Psi$. A bijective map $\Psi:E\to F$ is called an
{\it isomorphism of Hilbert $C^*$-modules} if $\Psi$ and $\Psi^{-1}$
are morphisms of Hilbert $C^*$-modules. We denote the group of all
isomorphisms of Hilbert $C^*$-modules from $E$ to itself by
$Aut(E)$.

\begin{definition}\label{def1.8}
Let $G$ be a locally compact group and let $\m A$ be a
$C^*$-algebra. Let $E$ be a full Hilbert $\m A$-module. A group
homomorphism $t\mapsto\eta_{t}$ from $G$ to $Aut({E})$ is called a
{\em continuous  action of $G$ on ${E}$} if $t\mapsto \eta_{t}(x)$
from $G$ to ${E}$ is continuous for each $x\in E$. In this case we
call the triple $(G,\eta,E)$ a {\rm dynamical system on the Hilbert
$\m A$-module $E$}. Any $C^*$-dynamical system $(G,\alpha,\m A)$ can
be regarded as a dynamical system on the Hilbert $\m A$-module $\m
A$.
\end{definition}


Let $E$ be a full Hilbert $C^*$-module over a unital $C^*$-algebra $\m A$. Let $ F$ be a von Neumann $\m
C$-$\m B$ module, where $\m C$ is a unital $C^*$-algebra and $\m B$ is a von Neumann algebra. We define covariant $\tau$-maps with respect to
$(G,\eta,E)$ in Section \ref{sec1.2}, and develop a covariant version of our Stinespring type theorem. If $(G,\eta,E)$ is a dynamical system on $E$, 
then there exists a crossed product Hilbert $C^*$-module $E\times_{\eta} G$ (cf. \cite{EKQR00}). 
In Section \ref{sec1.3}, we prove that any
 $\tau$-map from $E$ to $F$ which is $(u',u)$-covariant with respect to the
dynamical system $(G,\eta,E)$ extends to a $(u',u)$-covariant
$\widetilde{\tau}$-map from $E\times_{\eta} G$ to $F$, where $\tau$ and $\widetilde{\tau}$ are
completely positive.  As an application we describe how covariant $\tau$-maps
on $(G,\eta,E)$ and covariant $\widetilde{\tau}$-maps on
$E\times_{\eta} G$  are related, where $\tau$ and $\widetilde{\tau}$ are
completely positive maps. The approach in this article is similar to \cite{BRS12} and \cite{Jo11}. 
\section{A Stinespring type theorem and its covariant version}\label{sec1.2}
\begin{definition}\label{def1.1}
Let $\m A$ and $\m B$ be (pre-)$C^*$-algebras. Let $E$ be a Hilbert $\m A$-module and let $ F$, $ F'$ be inner product $\m B$-modules.
A map $\Psi:E\to \ma B^r( F, F')$ is called {\rm
quasi-representation} if there exists a
$*$-homomorphism $\pi:\m A\to \ma B^a ( F)$ satisfying
$$ \langle\Psi(y) f_1,\Psi(x) f_2\rangle=\langle\pi(\langle
x,y\rangle)f_1,f_2\rangle~\mbox{for all}~x,y\in
E~\mbox{and}~f_1,f_2\in F.$$ In this case we say that $\Psi$ is a quasi-representation of $E$ on $ F$ and $ F'$, and $\pi$ is associated
to $\Psi$. 
\end{definition}

It is clear that Definition \ref{def1.1} generalizes the notion of representations of Hilbert $C^*$-modules on Hilbert spaces (cf. p.804 of \cite{Jo11}). 
The following theorem provides a decomposition of $\tau$-maps in terms of quasi-representations. We use the symbol sot-$\lim$ for the limit
with respect to the strong
operator topology. Notation $[S]$ will be used for the norm closure of
the linear span of any set $S.$

\begin{theorem}\label{prop1.3}
Let $\m A$ be a unital $C^*$-algebra and let $\m B$ be a von Neumann
algebra acting on a Hilbert space $\m H$. Let $E$ be a Hilbert $\m
A$-module, $ E'$ be a von Neumann $\m B$-module and let $\tau:\m
A\to\m B$ be a completely positive map. If $T:E\to E'$ is a
$\tau$-map, then there  exist

\begin{itemize}
\item [(i)]
\begin{enumerate}
\item [(a)] a von Neumann $\m B$-module $F$ and a representation $\pi$ of $\m A$ to $\ma B^a (F)$,
\item [(b)] a map $V\in \ma B^a (\m B,F)$ such that
$\tau(a)b=V^{*}\pi(a)Vb~\mbox{for all}~a\in\m A~\mbox{and}~b\in \m
B,~$
\end{enumerate}
\item [(ii)]
\begin{enumerate}
\item [(a)] a von Neumann $\m B$-module $ F'$ and a quasi-representation $\Psi:E\to \ma B^a (F, F')$ such that $\pi$ is associated to $\Psi$,
\item [(b)] a coisometry $S$ from $ E'$ onto $ F'$ satisfying
 $$T(x)b=S^{*}\Psi(x)Vb~\mbox{for all}~x\in E~\mbox{and}~b\in \m
B.$$
\end{enumerate}
\end{itemize}
\end{theorem}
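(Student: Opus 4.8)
The plan is to build everything via a Paschke-type GNS construction applied first to $\tau$ and then to the $\tau$-map $T$, mimicking the classical Stinespring argument for Hilbert modules as in \cite{Pas73} and \cite{BRS12}, but carrying out the strong-closure step to stay inside the category of von Neumann modules.

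\textbf{Step 1: GNS module for $\tau$.} First I would form the algebraic tensor product $\m A \odot \m B$ and equip it with the $\m B$-valued semi-inner product determined by $\langle a_1 \otimes b_1, a_2 \otimes b_2\rangle := b_1^{*}\tau(a_1^{*}a_2)b_2$; complete positivity of $\tau$ makes this positive semidefinite. Quotienting by the length-zero elements and completing gives a Hilbert $\m B$-module $F_0$ with a left action $\pi_0(a)(a'\otimes b) := (aa')\otimes b$, which is a $*$-homomorphism into $\ma B^a(F_0)$, and a map $V_0 : \m B \to F_0$, $V_0(b) := 1_{\m A}\otimes b$, which is adjointable with $V_0^{*}(a\otimes b) = \tau(a)b$; hence $V_0^{*}\pi_0(a)V_0 b = \tau(a)b$. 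To get a \emph{von Neumann} module I would pass to the strong closure $F$ of $F_0$ inside $\ma B^a(\m H, F_0\bigodot\m H)$ as in the definition quoted from \cite{Sk00}; $\pi_0$ and $V_0$ extend to $\pi$ and $V$ on $F$ (the left action remains adjointable, and $V$ stays adjointable with the same formula). This yields part (i).

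\textbf{Step 2: GNS construction for the $\tau$-map $T$.} Next I would apply the same recipe one level up, using $T$ in place of $\tau$. Form $E \odot F$ (where $F$ is the module from Step 1), with the $\m B$-valued semi-inner product $\langle x_1\otimes \xi_1, x_2\otimes \xi_2\rangle := \langle \xi_1, \pi(\langle x_1,x_2\rangle)\xi_2\rangle$; positivity here is exactly the quasi-representation identity one wants. Quotient, complete, and strongly close to obtain a von Neumann $\m B$-module $F'$. Define $\Psi(x) : F \to F'$ by $\Psi(x)\xi := x\otimes \xi$; a direct computation gives $\langle \Psi(y)f_1, \Psi(x)f_2\rangle = \langle \pi(\langle x,y\rangle)f_1, f_2\rangle$, so $\Psi$ is a quasi-representation with $\pi$ associated to it (one checks $\Psi(x)$ is bounded right-linear and, after strong closure, adjointable). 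This is part (ii)(a).

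\textbf{Step 3: the coisometry $S$ and the factorization of $T$.} The key point is to compare $E'$ with $F'$. Define $S_0 : F'_0 \to E'$ on elementary tensors by $S_0(x\otimes V b) := T(x)b$ and extend linearly; using $\langle T(x),T(y)\rangle = \tau(\langle x,y\rangle) = \langle Vb', \pi(\langle x,y\rangle)Vb\rangle$ one verifies $S_0$ is a well-defined isometry from $[\,\Psi(E)V\m B\,]$ onto $[\,T(E)\m B\,]$. Since $E'$ is a von Neumann module and $T$ is a $\tau$-map, $[\,T(E)\m B\,]$ is strongly dense in $E'$ (or one can simply replace $E'$ by this submodule as is standard); passing to strong closures, $S_0$ extends to an adjointable isometry from $F'$ onto $E'$, whose adjoint $S := S_0^{*}$ is then a coisometry from $E'$ onto $F'$. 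Finally $S^{*}\Psi(x)Vb = S_0(x\otimes Vb) = T(x)b$, giving (ii)(b).

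\textbf{Main obstacle.} The genuinely delicate part is \emph{not} the algebraic GNS bookkeeping but ensuring that every object produced survives the passage to the strong operator topology and remains adjointable: that the strong closures $F$ and $F'$ are again von Neumann $\m B$-modules in the sense of the stated definition, that $\pi$, $V$, $\Psi(x)$ and $S_0$ all extend boundedly and adjointably to these closures, and that the identities (which hold on the dense pre-modules) persist in the limit. One must also be slightly careful about the minimality/range issue for $S$ — the clean statement "$S$ a coisometry \emph{onto} $F'$" forces $F'$ to be generated by $\Psi(E)V\m B$, so $F'$ should be defined as the strong closure of that submodule rather than of all of $E\odot F$; with that choice $S$ is automatically surjective and the argument closes.
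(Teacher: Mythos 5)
Your Steps 1 and 2 follow the same overall strategy as the paper: a Paschke/Stinespring GNS construction for $\tau$ followed by a second-level construction for $T$ and a coisometry factorization, with the strong closures taken to stay in the von Neumann category. The only real difference in Step 2 is cosmetic: you realize $F'$ abstractly as (the strong closure of) the interior tensor product $E\bigodot_\pi F$ with $\Psi(x)\xi=x\otimes\xi$, whereas the paper defines $\Psi_0(x)$ directly on the dense submodule $[\pi_0(\m A)V\m B]$ of $F_0$ with values in $F''=[T(E)\m B]\subseteq E'$, so that $F'$ is from the outset a strongly closed submodule of $E'$. Both give the same quasi-representation; your version just postpones the comparison with $E'$ to Step 3.

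Step 3 is where there is a genuine gap. The claim that $[T(E)\m B]$ is strongly dense in $E'$ is false in general: $E'$ is an arbitrary von Neumann $\m B$-module containing the range of $T$, and nothing in the hypotheses prevents $[T(E)\m B]$ from being a proper strongly closed submodule (take $E'=E'_1\oplus E'_2$ with $T$ mapping into $E'_1$). Your fallback of replacing $E'$ by this submodule changes the statement, since the theorem asks for a coisometry defined on the given $E'$. What actually closes the argument — and what your proposal never invokes — is the fact that every strongly closed submodule of a von Neumann module is orthogonally complemented (Theorem 5.2 of \cite{Sk00}). In your setup this is exactly what makes the isometry $S_0:F'\to E'$ \emph{adjointable} (its adjoint being the orthogonal projection of $E'$ onto the strong closure of $[T(E)\m B]$ composed with $S_0^{-1}$ on its range); in the paper it directly produces the projection $S$ of $E'$ onto $F'$. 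This is the one place the von Neumann hypothesis on $E'$ is essential: for general Hilbert $C^*$-modules closed submodules need not be complemented and the coisometry need not exist. Your "main obstacle" paragraph worries instead about surjectivity of $S$ onto $F'$, which is automatic once $S=S_0^{*}$ with $S_0$ an isometry; the real issue is the complementation of the image of $S_0$ inside $E'$.
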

\begin{proof}
 Let $\langle~,~\rangle$ be a $\m B$-valued positive definite
semi-inner product on $\m A\bigotimes_{alg} \m B$ defined by
$$\langle a\otimes b, c\otimes d\rangle:=b^*\tau(a^* c)d~\mbox{for}~a,c\in \m A~\mbox{and}~b,d\in\m B.$$
Using Cauchy-Schwarz inequality we deduce that $K=\{x\in {\m
A\bigotimes_{alg} \m B}:\langle x, x\rangle=0\}$ is a submodule of
$\m A\bigotimes_{alg} \m B$. Therefore $\langle~,~\rangle$ extends
naturally on the quotient module $\left({\m A\bigotimes_{alg} \m
B}\right)/ K$ as a $\m B$-valued inner product. We get a Stinespring triple
$(\pi_0, V, F_0)$ associated to $\tau$, construction is similar to
Proposition 1 of \cite{Kap93}, where $F_0$ is the completion of the
inner-product $\m B$-module $\left({\m A\bigotimes_{alg} \m
B}\right)/ K$, $\pi_0:\m A\to \ma B^a (F_0)$ is a $*$-homomorphism
defined by
$$\pi_0(a')(a\otimes b+ K):= a' a\otimes b+ K~\mbox{for all}~a,a'\in\m A~\mbox{and}~b\in \m B,$$
and a mapping $V\in \ma B^a(\m B, F_0)$ is defined by
\[
V(b)=1\otimes b+ K~\mbox{for all}~b\in \m B.\] Indeed, $[\pi_0(\m
A)V \m B]=F_0$. Let $F$ be the strong operator topology closure of
$F_0$ in $\ma B(\m H,F_0\bigodot\m  H)$. Without loss of generality
we can consider $V\in \ma B^a(\m B, F)$. Adjointable left action of
$\m A$ on $F_0$ extends to an adjointable left action of $\m A$ on
$F$ as follows:
\[
 \pi(a)(f):=\mbox{sot-}\displaystyle\lim_{\alpha} \pi_0(f^0_{\alpha})~\mbox{where $a\in \m A$,
 $f$=sot-$\displaystyle\lim_{\alpha} f^0_{\alpha}\in F$ with $f^0_{\alpha}\in F_0$.}
\]
For all $a\in \m A$; $f$=sot-$\displaystyle\lim_{\alpha} f^0_{\alpha}$,
$g$=sot-$\displaystyle\lim_{\beta} g^0_{\beta}\in F$ with
$f^0_{\alpha},g^0_{\beta}\in F_0$ we have

\begin{align*}
 \langle \pi(a) f,g\rangle &=\mbox{sot-}\displaystyle\lim_{\beta}\langle \pi(a) f,g^0_{\beta}\rangle
 =\mbox{sot-}\displaystyle\lim_{\beta}(\mbox{sot-}\displaystyle\lim_{\alpha}\langle g^0_{\beta},\pi_0(a) f^0_{\alpha}\rangle)^*
 \\ &=\mbox{sot-}\displaystyle\lim_{\beta}(\mbox{sot-}\displaystyle\lim_{\alpha}\langle \pi_0(a)^*g^0_{\beta}, f^0_{\alpha}\rangle)^*
 = \langle  f,\pi(a^*)g\rangle.
\end{align*}
The triple $(\pi, V, F)$ satisfies all the conditions of the statement
(i).

Let $ F''$ be the Hilbert $\m B$-module $[T(E)\m B]$. For $x\in E$,
define $\Psi_0(x):F_0\to F''$ by
\begin{align*}\Psi_0(x)(\displaystyle\sum_{j=1}^{n}\pi_0(a_j)Vb_j):=\displaystyle\sum_{j=1}^{n}T(xa_j)b_j~\mbox{
for all}~a_j\in \m A, b_j\in\m B.
\end{align*}
It follows that
\begin{align*}&\langle\Psi_0(y)
(\displaystyle\sum_{j=1}^n\pi_0(a_j)Vb_j),\Psi_0(x)(\displaystyle\sum_{i=1}^m\pi_0(a'_i)Vb'_i)\rangle
 = \displaystyle\sum_{j=1}^n \displaystyle\sum_{i=1}^m
b_j^{*}\langle T(ya_j),T(xa'_i)\rangle b'_i\\
&=\displaystyle\sum_{i=1}^m \displaystyle\sum_{j=1}^n
b_j^{*}\tau(\langle ya_j,xa'_i\rangle) b'_i =
\displaystyle\sum_{j=1}^n \displaystyle\sum_{i=1}^m\langle
\pi_0(a'_i)^*\pi_0(\langle x,y\rangle)\pi_0(a_j)Vb_j,Vb'_i\rangle
\\& = \langle \pi_0(\langle
x,y\rangle)(\displaystyle\sum_{j=1}^n\pi_0(a_j)Vb_j),
\displaystyle\sum_{i=1}^m\pi_0(a'_i)Vb'_i\rangle
\end{align*}
for all $x,y\in E, a'_i,a_j\in\m A, b'_i,b_j\in\m B~\mbox{where}~
1\leq j\leq n~\mbox{and}~1\leq i\leq m$. This computation proves
that $\Psi_0(x) \in \ma B^r(F_0, F'')$ for each $x\in E$ and also
that $\Psi_0:E\to \ma B^r(F_0, F'')$ is a quasi-representation. We
denote by $ F'$ the strong operator topology closure of $F''$ in
$\ma B(\m H, E'\bigodot \m H)$. Let $x\in E$, and let $\Psi(x):F\to
F'$ be a mapping defined by
\[
\Psi(x)(f):=\mbox{sot-}\lim_{\alpha} \Psi_0 (x)
f^0_{\alpha}~\mbox{where $f$=sot-$\displaystyle\lim_{\alpha}
f^0_{\alpha}\in F$ for $f^0_{\alpha}\in F_0$.}~
\]
 For all
$f$=sot-$\displaystyle\lim_{\alpha} f^0_{\alpha}\in F$ with $f^0_{\alpha}
\in F_0$ and for all $x,y\in E$ we have
\begin{align*}
\langle \Psi(x) f,\Psi(y) f\rangle=
\mbox{sot-}\lim_{\alpha}\{\mbox{sot-}\lim_{\beta}\langle
\Psi_0(y)f^0_{\alpha}, \Psi_0(x)f^0_{\beta}\rangle\}^*=\langle
f,\pi(\langle x,y\rangle)f\rangle.
\end{align*}
Since $F$ is a von Neumann $\m B$-module, this proves that
$\Psi:E\to \ma B^a (F, F')$ is a quasi-representation. Since, $ F'$
is a von Neumann $\m B$-submodule of $ E'$, there exists an
orthogonal projection
 from $ E'$ onto $ F'$ (cf. Theorem 5.2 of \cite{Sk00}) which we denote by $S$. Eventually
\begin{eqnarray*}
S^*\Psi(x) Vb=\Psi(x)Vb=\Psi(x)(\pi(1)Vb)=T(x)b~\mbox{for all}~x\in
E, b\in\m B.
\end{eqnarray*}

\end{proof}

Let $E$ be a (pre-)Hilbert $\m A$-module, where $\m A$ is a
(pre-)$C^*$-algebra $\m A$. A map $u\in \ma B^a (E)$ is said to be
{\it unitary} if $u^{*} u=u u^{*}=1_{E}$ where $1_{E}$ is the
identity operator on $E$. We denote the set of all unitaries in $\ma
B^a (E)$ by $\m U \ma B^a (E)$.

\begin{definition}\label{def1.4}
Let $\m B$ be a (pre-)$C^*$-algebra,
$(G,\alpha,\m A)$ be a $C^*$-dynamical system of a locally compact
group $G$, and let $ F$ be a (pre-)Hilbert $\m B$-module. A representation
$\pi:\m A\to \ma B^a ( F)$ is called {\rm $v$-covariant} with respect to $(G,\alpha,\m A)$ and with respect to a unitary representation $v:G\to
\m U \ma B^a ( F)$ if
\[
\pi(\alpha_{t}(a))=v_{t}\pi(a)v^{*}_{t}~\mbox{for all}~a\in\m A,t\in
G.
\]
In this case we write $(\pi,v)$ is a covariant representation of
$(G,\alpha, \m A)$.
\end{definition}

Let $E$ be a full Hilbert $\m A$-module and let $G$ be a locally compact group. If $(G,\eta,E)$ is a
dynamical system on $E$, then there exists a
unique $C^*$-dynamical system $(G,\alpha^{\eta},\m A)$ (cf. p.806 of
\cite{Jo11}) such that
$$\alpha^{\eta}_t(\langle x,y \rangle)=\langle {\eta}_{t}(x),
{\eta}_{t}(y)\rangle~\mbox{for all}~x,y\in E~\mbox{and}~t\in G.$$ We
denote by $(G,\alpha^{\eta},\m A)$ the $C^*$-dynamical system coming
from the dynamical system $(G,\eta,E)$. For all $x\in E$ and $a\in \m A$ we infer that $\eta_t
(xa)=\eta_t(x) \alpha^{\eta}_t (a)$, for
\begin{align*}
 \|\eta_t (xa)-\eta_t(x)\alpha^{\eta}_t (a)\|^2 =&\| \langle \eta_t (xa), \eta_t (xa)\rangle
 - \langle\eta_t (xa), \eta_t (x)\alpha^{\eta}_t (a)\rangle\\ &-\langle \eta_t (x)\alpha^{\eta}_t (a),\eta_t (xa)\rangle
 +\langle\eta_t (x)\alpha^{\eta}_t (a),\eta_t (x)\alpha^{\eta}_t (a)\rangle\|\\ =&\| \alpha^{\eta}_t(\langle xa, xa\rangle)
 - \langle\eta_t (xa), \eta_t (x)\rangle\alpha^{\eta}_t (a)\\ &-\alpha^{\eta}_t (a^*)\langle \eta_t (x),\eta_t (xa)\rangle 
 +\alpha^{\eta}_t (a^*)\langle\eta_t (x),\eta_t (x)\rangle\alpha^{\eta}_t (a)\| =& 0.
\end{align*}

\begin{definition}
Let $\m B$ and $\m C$ be unital (pre-)$C^*$-algebras. A {\rm
(pre-)$C^*$-correspondence from $\m C$ to $\m B$} is defined as a
(pre-)Hilbert $\m B$-module $ F$ together with a $*$-homomorphism $\pi':\m
C\to \ma B^a ( F)$. The adjointable left action of $\m C$ on $ F$ induced by
$\pi'$ is defined as
\[
cy:=\pi'(c)y~\mbox{for all}~c\in \m C,y\in F.
\]
\end{definition}

In the remaining part of this section a covariant version of Theorem \ref{prop1.3} is derived, which finds applications in the next section.
For that we first define covariant $\tau$-maps using the notion of (pre-)$C^*$-correspondence. 
Every von Neumann $\m B$-module $E$ can be considered as a (pre-)$C^*$-correspondence from $\ma B^a (E)$ to
$\m B$. 

\begin{definition} (cf. \cite{Jo11})
Let $\m A$ be a unital $C^*$-algebra and let $\m B$, $\m C$ be unital
(pre-)$C^*$-algebras. Let $E$ be a Hilbert $\m A$-module and let $
F$ be a (pre-)$C^*$-correspondence from $\m C$ to $\m B$. Let $u:G\to \m U \m B$ and $u':G\to \m U
\m C$ be unitary representations on a locally compact group $G$. A $\tau$-map, $T:E\to  F$, is called {\rm
$(u',u)$-covariant} with respect to the dynamical system $(G,\eta,E)$ if
\[
T(\eta_{t}(x))=u'_tT(x)u^{*}_t~\mbox{for all}~x\in E~\mbox{and}~t\in
G.
\]
\end{definition}

If $E$ is full and $T:E\to  F$ is a $\tau$-map which is $(u',u)$-covariant with respect to $(G,\eta,E)$, then the map $\tau$ is $u$-covariant with respect to
the induced $C^*$-dynamical system $(G,\alpha^{\eta},\m A)$, because
\begin{align*}
\tau(\alpha^{\eta}_t(\langle x,y \rangle))&=\tau(\langle
{\eta}_{t}(x), {\eta}_{t}(y)\rangle)=\langle T({\eta}_{t}(x)),
T({\eta}_{t}(y))\rangle=\langle
u'_tT(x)u^{*}_t,u'_tT(y)u^{*}_t\rangle
\\&=\langle T(x)u^{*}_t,T(y)u^{*}_t\rangle = u_t \langle
T(x),T(y)\rangle u^{*}_t=u_t\tau(\langle x,y\rangle)u^{*}_t
\end{align*}
for all $x,y\in E$ and $t\in G$.

\begin{definition} Let $(G,\eta,E)$ be a dynamical system on a Hilbert
$\m A$-module $E$, where $\m A$ is a $C^*$-algebra. Let $ F$ and $ F'$ be Hilbert $\m B$-modules over
a (pre-)$C^*$-algebra $\m B$. $w:G\to \m U\ma B^a ( F')$ and $v:G\to
\m U\ma B^a ( F)$ are unitary representations on a locally compact group $G$. A
quasi-representation of $E$ on $ F$ and $ F'$ is called {\rm
$(w,v)$-covariant with respect to $(G,\eta,E)$} if
\[
\Psi(\eta_{t}(x))=w_t\Psi(x)v^{*}_t~\mbox{for all}~x\in
E~\mbox{and}~t\in G.
\]
In this case we say that $(\Psi,v,w, F, F')$ is a covariant
quasi-representation of $(G,\eta,E)$. Any $v$-covariant
representation of a $C^*$-dynamical system $(G,\alpha,\m A)$ can be
regarded as a $(v,v)$-covariant representation of a dynamical system
on the Hilbert $\m A$-module $\m A$.
\end{definition}

Let $\m A$ be a $C^*$-algebra and let $G$ be a locally compact
group. Let $E$ be a full Hilbert $\m A$-module, and let $ F$ and $
F'$ be Hilbert $\m B$-modules over a (pre-)$C^*$-algebra $\m B$. If
$(\Psi,v,w, F, F')$ is a covariant quasi-representation  with respect to $(G,\eta,E)$, then the representation
of $\m A$ associated to $\Psi$ is $v$-covariant with respect to $(G,\alpha^{\eta},\m A)$. Moreover, if $\pi$ is the representation
associated to $\Psi$, then 
\begin{align*}
\langle\pi(\alpha^{\eta}_t(\langle x,y \rangle))f,f'\rangle
&=\langle\pi(\langle {\eta}_{t}(x),
{\eta}_{t}(y)\rangle)f,f'\rangle=\langle
\Psi({\eta}_{t}(y))f,\Psi({\eta}_{t}(x)) f'\rangle\\&= \langle
w_t\Psi(y)v^{*}_t f, w_t\Psi(x)v^{*}_t f'\rangle=\langle
v_t\pi(\langle x,y\rangle)v^{*}_t f,f'\rangle
\end{align*}
for all $x,y\in E$, $t\in G$ and $f,f'\in F$.

\begin{theorem}\label{prop1.6}
Let $\m A$, $\m C$ be unital $C^*$-algebras and let $\m B$ be a von
Neumann algebra acting on $\m H$. Let $u:G\to \m U \m B$,
$u':G\to \m U \m C$ be unitary representations of a locally compact
group $G$. Let $E$ be a full Hilbert $\m A$-module and $E'$ be a von
Neumann $\m C$-$\m B$ module. If $T:E\to E'$ is a $\tau$-map which
is $(u',u)$-covariant with respect to $(G,\eta,E)$ and if $\tau:\m
A\to \m B$ is completely positive, then there exists

\begin{itemize}
\item [(i)]
\begin{enumerate}
\item [(a)] a von Neumann $\m B$-module $F$ with a covariant representation $(\pi,
v)$ of $(G,\alpha^{\eta},\m A)$ to $\ma B^a (F)$,
\item [(b)] a map $V\in \ma B^a (\m B,F)$ such that
\begin{enumerate}
\item [(1)] $\tau(a)b=V^{*}\pi(a)Vb~\mbox{for all}~a\in\m A,~b\in \m
B,~$
\item [(2)] $v_{t}Vb=Vu_{t}b$ for all $t\in G$, $b\in \m B$,
\end{enumerate}
\end{enumerate}
\item [(ii)]
\begin{enumerate}
\item [(a)] a von Neumann $\m B$-module $ F'$ and a covariant quasi-representation \newline $(\Psi,v,
w,F, F')$ of $(G,\eta,E)$ such that $\pi$ is associated to $\Psi$,
\item [(b)] a coisometry $S$ from $ E'$ onto $ F'$ such that
\begin{enumerate}
\item [(1)] $T(x)b=S^{*}\Psi(x)Vb~\mbox{for all}~x\in E,~b\in \m
B,$
\item [(2)] $w_{t}Sy=Su'_{t}y$ for all $t\in G$, $y\in  E'$.
\end{enumerate}
\end{enumerate}
\end{itemize}
\end{theorem}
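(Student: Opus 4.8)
The plan is to run the construction of Theorem~\ref{prop1.3} and then equip each of the modules produced there with the unitary group actions $v$ and $w$ by letting $G$ act naturally on the algebraic tensor product and on the linear span of $\{T(x)b\}$, exploiting the covariance hypotheses to check that the actions are well-defined and isometric, and finally extending them by strong-operator continuity to the von Neumann closures. First I would recall the Stinespring triple $(\pi_0,V,F_0)$ with $F_0$ the completion of $(\m A\bigotimes_{alg}\m B)/K$. For $t\in G$ define $v^0_t$ on $\m A\bigotimes_{alg}\m B$ by $v^0_t(a\otimes b):=\alpha^{\eta}_t(a)\otimes u_t b$. Because $\tau$ is $u$-covariant with respect to $(G,\alpha^{\eta},\m A)$ (this is exactly the computation displayed before Theorem~\ref{prop1.6}), a short calculation gives $\langle v^0_t(a\otimes b),v^0_t(c\otimes d)\rangle=(u_tb)^*\tau(\alpha^{\eta}_t(a^*c))(u_td)=b^*u_t^*u_t\tau(a^*c)u_t^*u_td=b^*\tau(a^*c)d$, so $v^0_t$ preserves the semi-inner product, hence descends to a well-defined isometric module map on $(\m A\bigotimes_{alg}\m B)/K$ that extends to a unitary $v_t$ on $F_0$; the cocycle/homomorphism property $v_sv_t=v_{st}$ and $v_e=1$ are immediate from those of $\alpha^{\eta}$ and $u$. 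Covariance $\pi_0(\alpha^{\eta}_t(a'))=v_t\pi_0(a')v_t^*$ follows by evaluating both sides on $a\otimes b+K$, and $v_tVb=v_t(1\otimes b+K)=1\otimes u_tb+K=Vu_tb$ gives (i)(b)(2). Since each $v_t$ is adjointable (indeed unitary) on $F_0$, it extends strong-operator-continuously to a unitary on the von Neumann closure $F$, and the extended $\pi$ inherits covariance by taking sot-limits, just as adjointability of the left action was extended in the proof of Theorem~\ref{prop1.3}.

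Next I would do the analogous thing on the target side. Recall $F''=[T(E)\m B]$ and $\Psi_0(x)(\sum_j\pi_0(a_j)Vb_j)=\sum_j T(xa_j)b_j$. Define $w^0_t$ on $F''$ on elementary tensors by $w^0_t(T(x)b):=u'_tT(x)u_t^*b\cdot u_t=\ldots$; more precisely, since $T$ is $(u',u)$-covariant, $T(\eta_t(x))b=u'_tT(x)u_t^*b$, and I want $w_t$ so that the intertwining relation in (ii)(b)(2), $w_tSy=Su'_ty$, holds on $E'$ and $\Psi$ becomes $(w,v)$-covariant. The natural choice is $w^0_t(\xi):=u'_t\xi u_t^*$ on $F''\subseteq E'$ (using the left $\m C$-action via $u'$ on $E'$ and the right $\m B$-action via $u$); one checks $\langle w^0_t\xi,w^0_t\zeta\rangle=u_t\langle\xi,\zeta\rangle u_t^*$, which is not quite inner-product preserving but is exactly the relation needed for a quasi-representation covariance, and that $w^0_t$ maps $F''$ into itself because $u'_tT(x)u_t^*b=T(\eta_t(x))u_t b\in [T(E)\m B]$. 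Then I verify $\Psi_0(\eta_t(x))=w^0_t\Psi_0(x)v_t^*$ by evaluating both sides on $\sum_j\pi_0(a_j)Vb_j$: the left side is $\sum_j T(\eta_t(x)\alpha^{\eta}_t\text{-translate of }a_j)b_j$ and using $\eta_t(xa)=\eta_t(x)\alpha^{\eta}_t(a)$ (the identity proved just before Definition~\ref{def1.4} — wait, it is in the displayed computation after the paragraph introducing $(G,\alpha^{\eta},\m A)$) together with $(u',u)$-covariance of $T$ one matches it with the right side. Extending $w^0_t$ to the sot-closure $F'$ of $F''$ in $\ma B(\m H,E'\bigodot\m H)$ gives a unitary representation $w:G\to\m U\ma B^a(F')$, and taking sot-limits in the defining formula of $\Psi$ shows $(\Psi,v,w,F,F')$ is a covariant quasi-representation with $\pi$ associated to it.

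Finally, for the coisometry $S$: it is the orthogonal projection of $E'$ onto the von Neumann submodule $F'$ from the proof of Theorem~\ref{prop1.3}. I must check the intertwining $w_tSy=Su'_ty$ for $y\in E'$. Since $u'_t$ need not preserve $F'$ a priori — but in fact $w_t$ was defined as $\xi\mapsto u'_t\xi u_t^*$ on $F''$, and $F'$ is its sot-closure — the key point is that the unitary $\xi\mapsto u'_t\xi u_t^*$ on all of $E'$ (it is unitary on $E'$ because left $\m C$- and right $\m B$-actions are adjointable) restricts to $w_t$ on $F'$; hence it commutes appropriately with the projection $S$ onto $F'$ provided $F'$ is invariant under this global unitary, which holds since $F''$ is and the unitary is sot-continuous. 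Then $w_tSy=u'_t(Sy)u_t^*$ and $Su'_ty$: one shows these agree by noting $S$ is the projection onto $F'$ and the global unitary $y\mapsto u'_tyu_t^*$ leaves $F'$ invariant, so it commutes with $S$; combined with $w_t=u'_t(\cdot)u_t^*\!\restriction_{F'}$ this yields (ii)(b)(2). Statement (ii)(b)(1) is unchanged from Theorem~\ref{prop1.3}. The main obstacle I anticipate is the bookkeeping around $w_t$: making precise in what sense "$u'_t(\cdot)u_t^*$" is a well-defined adjointable unitary on the concrete operator module $E'\subseteq\ma B(\m H,E'\bigodot\m H)$ and that it carries $F''$ (hence $F'$) into itself, and then disentangling the two conjugations (by $u'_t$ on the left, by $u_t$ on the right) consistently across $F_0\rightsquigarrow F$ and $F''\rightsquigarrow F'$ so that all four covariance/intertwining identities (i)(b)(2), (ii)(a) covariance of $\Psi$, (ii)(b)(2), and covariance of $\pi$) hold simultaneously with the \emph{same} pair $(v,w)$. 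Everything else is a routine sot-limit argument modeled on the non-covariant proof.
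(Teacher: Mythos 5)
Part (i) of your proposal is correct and is exactly the paper's argument: define $v^0_t$ on elementary tensors by $a\otimes b\mapsto \alpha^{\eta}_t(a)\otimes u_tb$, use the $u$-covariance of $\tau$ to see it preserves the semi-inner product, pass to $F_0$ and then to the sot-closure $F$, and read off (i)(b)(2) from $v^0_t(1\otimes b+K)=1\otimes u_tb+K$.

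The gap is in part (ii), in your choice of $w$. You set $w^0_t(\xi):=u'_t\xi u_t^{*}$ on $F''=[T(E)\m B]$. This map cannot serve as the $w_t$ of a covariant quasi-representation, for three reasons. First, it is not right $\m B$-linear: $u'_t(\xi b)u_t^{*}\neq (u'_t\xi u_t^{*})b$ in general, so it does not even lie in $\ma B^a(F'')$, whereas the definition in the paper requires $w:G\to\m U\ma B^a(F')$. Second, as you yourself observe, it satisfies $\langle w^0_t\xi,w^0_t\zeta\rangle=u_t\langle\xi,\zeta\rangle u_t^{*}$ rather than $\langle\xi,\zeta\rangle$; an element of $\m U\ma B^a(F')$ must preserve the $\m B$-valued inner product exactly, so this is not ``exactly the relation needed'' --- it is an obstruction. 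Third, it does not satisfy the covariance identity: computing on generators,
\begin{equation*}
\Psi_0(\eta_t(x))(\pi_0(a)Vb)=T\bigl(\eta_t(x\,\alpha^{\eta}_{t^{-1}}(a))\bigr)b
=u'_t\,T(x\,\alpha^{\eta}_{t^{-1}}(a))\,u_{t^{-1}}b
=u'_t\,\bigl[\Psi_0(x)\,v^0_{t^{-1}}(\pi_0(a)Vb)\bigr],
\end{equation*}
so the factor $u_t^{*}$ coming from $T(\eta_t(y))=u'_tT(y)u_t^{*}$ is already absorbed on the domain side by $v^0_{t^{-1}}$ (via $v^0_{t^{-1}}Vb=Vu_{t^{-1}}b$); conjugating again by $u_t^{*}$ on the right in $w^0_t$ double-counts it and destroys the identity $\Psi_0(\eta_t(x))=w^0_t\Psi_0(x)v^0_{t^{-1}}$. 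The correct choice, which is what the paper makes, is $w^0_t:=u'_t|_{F''}$, i.e.\ left multiplication by the unitary $u'_t\in\m U\m C$ through the adjointable left action of $\m C$ on $E'$; this maps $F''$ into itself because $u'_tT(x)b=T(\eta_t(x))u_tb\in[T(E)\m B]$, it is right $\m B$-linear and inner-product preserving, and since $F'$ (and hence $F'^{\perp}$) is $u'$-invariant the projection $S$ satisfies $Su'_t=u'_tS=w_tS$ on $E'$, which is (ii)(b)(2). Your closing paragraph, which tries to prove (ii)(b)(2) by arguing that the ``global unitary'' $y\mapsto u'_tyu_t^{*}$ on $E'$ commutes with $S$, inherits the same defect and would in any case give $u'_t(Sy)u_t^{*}$ rather than the required $Su'_ty$.
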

\begin{proof}
By part (i) of Theorem \ref{prop1.3} we obtain the triple $(\pi,V, F)$
associated to $\tau$. Here $F$ is a von Neumann $\m B$-module,
 $V\in \ma B^a (\m B,F)$, and $\pi$ is a representation of $\m A$ to $\ma B^a (F)$ such that
$$\tau(a)b=V^{*}\pi(a)Vb~\mbox{for all}~a\in\m A,~b\in \m
B.~$$ Recall the proof, using the submodule $K$ we have constructed
the triple $(\pi_0, V, F_0)$ with $[\pi_0(\m A)V\m B]=F_0$. Define
$v^0: G\to \ma B^a (F_0)$ (cf. Theorem 3.1, \cite{He99}) by
$$v^0_t(a\otimes b+ K):=\alpha_t(a)\otimes u_{t}(b) + K~\mbox{for all $a\in\m A$, $b\in \m B$ and $t\in G.$}~$$
 Since $\tau$ is $u$-covariant with respect to
$(G,\alpha^{\eta},\m A)$, for $a,a'\in\m A$, $b,b'\in \m B$
and $t\in G$ it follows that
 \begin{align*}
  \langle v^0_t a\otimes b+ K, v^0_t a'\otimes b'+ K\rangle 
  &=\langle \alpha_t (a)\otimes u_t b,\alpha_t (a')\otimes u_t b'\rangle=(u_t b)^* \tau(\alpha_t(a^*a'))u_t b'
  \\ &=b^*\tau(a^*a')b'=\langle a\otimes b+ K, a'\otimes b'+ K\rangle.
 \end{align*}
This map $v^0_t$ extends as a unitary on $F_0$ for each $t\in G$ and
further we get a group homomorphism $v^0:G\to \m U \ma B^a (\m
F_0)$. The continuity of $t\mapsto\alpha^{\eta}_{t}(b)$ for each
$b\in\m B$, the continuity of $u$ and the fact that $v^0_t$ is a
unitary for each $t\in G$ together implies the continuity of $v^0$.
Thus $v^0:G\to \m U \ma B^a (\m F_0)$ becomes a unitary
representation. For each $t\in G$ define $v_t :F\to F$ by
\[
 v_t (\mbox{sot-}\lim_{\alpha} f^0_{\alpha}):=\mbox{sot-}\lim_{\alpha} v^0_t (f^0_{\alpha})~\mbox{where $f$
 =sot-$\displaystyle\lim_{\alpha} f^0_{\alpha}\in F$ for
$f^0_{\alpha}\in F_0$.}~
\]
It is clear that $v:G\to \ma B^a (F)$  is a unitary representation
of $G$ on $F$ and moreover it satisfies the condition (i)(b)(2) of
the statement.

Notation $F''$ will be used for $[T(E)\m B]$ which is a Hilbert $\m
B$-module. Let $ F'$ be the strong operator topology closure of $
F''$ in $\ma B(\m H, E'\bigodot \m H)$. For each $x\in E$, define
$\Psi_0(x):F_0\to F''$ by
\[\Psi_0(x)(\displaystyle\sum_{j=1}^{n}\pi(a_j)Vb_j):=\displaystyle\sum_{j=1}^{n}T(xa_j)b_j~\mbox{
for all}~a_j\in \m A, b_j\in\m B
\]
and define $\Psi(x):F\to  F'$ by
\[
\Psi(x)(f):=\mbox{sot-}\lim_{\alpha} \Psi_0 (x)
f^0_{\alpha}~\mbox{where $f$=sot-$\displaystyle\lim_{\alpha}
f^0_{\alpha}\in F$ for $f^0_{\alpha}\in F_0$.}~
\]
$\Psi_0:E\to \ma B^r(F_0, F'')$ and $\Psi:E\to \ma B^a (F, F')$ are
quasi-representations (see part (ii) of Theorem \ref{prop1.3}).
Indeed, there exists an orthogonal projection $S$ from $ E'$ onto $
F'$ such that
$$T(x)b=S^{*}\Psi(x)Vb~\mbox{for all}~x\in E~\mbox{and}~b\in \m
B.$$ Since $T$ is $(u',u)$-covariant, we have
\[u'_t(\displaystyle\sum_{i=1}^{n}T(x_i)b_i)=\displaystyle\sum_{i=1}^{n}T(\eta_{t}(x_i))u_{t}b_i ~\mbox{for all}~t\in G,x_i\in E,b_i\in \m B,
i=1,2,\ldots,n.\] From this computation it is clear that $ F''$ is
invariant under $u'$. For each $t\in G$ define $w^0_t:=u'_t|_{
F''}$, the restriction of $u'_t$ to $ F''$. In fact, $t\mapsto
w^0_t$ is a unitary representation of $G$ on $ F''$. Further
\begin{align*}
&\Psi_0(\eta_{t}(x))(\displaystyle\sum_{i=1}^{n}\pi_0(a_i)Vb_i)=
\displaystyle\sum_{i=1}^{n}T(\eta_{t}(x)\alpha^{\eta}_{t}\alpha^{\eta}_{t^{-1}}(a_i))b_i
=
\displaystyle\sum_{i=1}^{n}T(\eta_{t}(x\alpha^{\eta}_{t^{-1}}(a_i)))b_i
\\ &=\displaystyle\sum_{i=1}^{n}u^{'}_tT(x\alpha^{\eta}_{t^{-1}}(a_i))u_{t^{-1}}b_i
=w^0_{t}\Psi_0(x)(\displaystyle\sum_{i=1}^{n}\pi_0(\alpha^{\eta}_{t^{-1}}(a_i))Vu_{t^{-1}}b_i)\\
&=
w^0_{t}\Psi_0(x)v_{t^{-1}}(\displaystyle\sum_{i=1}^{n}\pi_0(a_i)Vb_i)
\end{align*}
for all $a_1,a_2,\ldots,a_n\in \m A$, $b_1,b_2,\ldots,b_n\in \m B,
x\in E,t\in G$. Therefore $(\Psi_0,v^0, w^0,F_0, F'')$ is a
covariant quasi-representation of $(G,\eta,E)$ and $\pi_0$ is
associated to $\Psi_0$. For each $t\in G$ define $w_t: F'\to  F'$ by
\[
 w_t (\mbox{sot-}\displaystyle\lim_{\alpha} f''_{\alpha}):=\mbox{sot-}\displaystyle\lim_{\alpha} u'_t f''_{\alpha}
 ~\mbox{where all $f''_{\alpha}\in  F''$.}~
\]
 It is evident that the map $t\mapsto w_t$ is a unitary
representation of $G$ on $ F'$. $S$ is the orthogonal projection of
$ E'$ onto $ F'$ so we obtain  $w_tS=Su'_t$ on $ F$ for all $t\in
G$. Finally
\begin{align*}
&\Psi(\eta_{t}(x))f=\mbox{sot-}\lim_{\alpha} \Psi_0 (\eta_{t}(x))
f^0_{\alpha} =\mbox{sot-}\lim_{\alpha} w^0_{t}\Psi_0(x)v^0_{t^{-1}}
f^0_{\alpha}=w_{t}\Psi(x)v_{t^{-1}}f
\end{align*}
for all $x\in E$, $t\in G$ and $f$=sot-$\displaystyle\lim_{\alpha}
f^0_{\alpha}\in F$ for $f^0_{\alpha}\in F_0$. Whence $(\Psi,v, w,F, F')$
is a covariant quasi-representation of $(G,\eta,E)$ and observe that $\pi$ is associated to $\Psi$.
\end{proof}

\section{$\tau$-maps from the crossed product of Hilbert $C^*$-modules}\label{sec1.3}

Let $(G,\eta,E)$ be a dynamical system on $E$, which is a full Hilbert $C^*$-module over $\m A$, where $G$ is a locally
compact group. The {\it crossed product} Hilbert $C^*$-module $E\times_{\eta}G$
(cf. Proposition 3.5, \cite{EKQR00}) is the completion of an inner-product $\m
A\times_{\alpha^{\eta}}G$-module $C_c(G,E)$ such that the module action
and the $\m A\times_{\alpha^{\eta}}G$-valued inner product are given
by
\begin{align*}\label{eqn1} lg(s)&=\int_{G}
l(t)\alpha^{\eta}_{t}(g(t^{-1}s))dt,\\
\langle l,m\rangle_{\m A\times_{\alpha^{\eta}}G }(s)&=\int_{G}
\alpha^{\eta}_{t^{-1}}(\langle l(t),m(ts)\rangle)dt
\end{align*}
respectively for $s\in G$, $g\in C_c (G,\m A)$ and $l,m\in C_c
(G,E)$. The following lemma shows that any covariant quasi-representation
$(\Psi_0,v^0,w^0,F_0, F')$ with respect to $(G,\eta,E)$ provides a
quasi-representation $\Psi_0\times v^0$ of $E\times_{\eta}
G$ on $F_0$ and $F'$ satisfying
\[
(\Psi_0\times v^0)(l)=\int_{G} \Psi_0(l(t))v^0_t dt~\mbox{for all $l\in C_c(G,E)$.}
\]
Moreover, it says that if $\pi_0$ is associated to $\Psi_0$, then the
integrated form of the covariant representation $(\pi_0,v^0,F_0)$ with respect to
$(G,\alpha^{\eta},\m A)$ is associated to $\Psi_0\times v^0$.

\begin{lemma}\label{lem1.3}
Let $(G,\eta,E)$ be a dynamical system on
a full Hilbert $\m A$-module $E$, where $\m A$ is a unital $C^*$-algebra and $G$ is a locally compact group. 
Let $F_0$ and $ F'$ be Hilbert $\m B$-modules, where $\m B$ is a von Neumann
algebra acting on a Hilbert space $\m H$. If $(\Psi_0,v^0,w^0,F_0, F')$ is a covariant
quasi-representation with respect to $(G,\eta,E)$, then $\Psi_0\times v^0$ is a
quasi-representation of $E\times_{\eta} G$ on $F_0$ and $ F'$.
\end{lemma}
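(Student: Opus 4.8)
The plan is to verify directly that $\Psi_0 \times v^0$, defined by $(\Psi_0 \times v^0)(l) = \int_G \Psi_0(l(t)) v^0_t \, dt$ for $l \in C_c(G,E)$, is a quasi-representation of $E \times_\eta G$ on $F_0$ and $F'$ with associated $*$-homomorphism the integrated form $\pi_0 \times v^0 : \m A \times_{\alpha^\eta} G \to \ma B^a(F_0)$. First I would record that the integrated form $\pi_0 \times v^0$ is a genuine $*$-homomorphism of the crossed product $C^*$-algebra into $\ma B^a(F_0)$; this is the standard fact that a covariant representation $(\pi_0, v^0, F_0)$ of $(G, \alpha^\eta, \m A)$ integrates to a representation on convolution functions, and it is legitimate to invoke it since $(\pi_0, v^0)$ was shown to be covariant in the construction preceding Theorem \ref{prop1.6}. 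I would also note that each $\Psi_0(l(t)) v^0_t$ lies in $\ma B^r(F_0, F')$ and that the integral converges (strongly, using compact support and continuity of $t \mapsto \Psi_0(\eta_t(x))$ and of $v^0$), so $(\Psi_0 \times v^0)(l) \in \ma B^r(F_0, F')$ for each $l$.

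The core of the proof is the single identity from Definition \ref{def1.1}: for all $l, m \in C_c(G,E)$ and $f_1, f_2 \in F_0$,
\[
\langle (\Psi_0 \times v^0)(m) f_1, (\Psi_0 \times v^0)(l) f_2 \rangle = \langle (\pi_0 \times v^0)(\langle l, m \rangle_{\m A \times_{\alpha^\eta} G}) f_1, f_2 \rangle.
\]
To establish it I would expand the left-hand side as a double integral $\int_G \int_G \langle \Psi_0(m(t)) v^0_t f_1, \Psi_0(l(s)) v^0_s f_2 \rangle \, ds\, dt$, then inside the integrand apply the defining quasi-representation property of $\Psi_0$ (with associated $\pi_0$) to rewrite $\langle \Psi_0(m(t)) g_1, \Psi_0(l(s)) g_2 \rangle = \langle \pi_0(\langle l(s), m(t)\rangle) g_1, g_2 \rangle$ with $g_i = v^0_{(\cdot)} f_i$. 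This turns the integrand into $\langle \pi_0(\langle l(s), m(t) \rangle) v^0_t f_1, v^0_s f_2 \rangle = \langle (v^0_s)^* \pi_0(\langle l(s), m(t)\rangle) v^0_t f_1, f_2 \rangle$. Using the covariance $v^0_{s^{-1}} \pi_0(a) v^0_s = \pi_0(\alpha^\eta_{s^{-1}}(a))$ and the group law $v^0_{s^{-1}} v^0_t = v^0_{s^{-1} t}$, the operator becomes $\pi_0\big(\alpha^\eta_{s^{-1}}(\langle l(s), m(t)\rangle)\big) v^0_{s^{-1} t}$. A change of variables $t \mapsto s r$ (i.e. $r = s^{-1} t$) and Fubini convert the double integral into $\int_G \pi_0\big(\int_G \alpha^\eta_{s^{-1}}(\langle l(s), m(sr)\rangle)\, ds\big) v^0_r f_1$, paired with $f_2$; the inner $\m A$-valued integral is exactly $\langle l, m \rangle_{\m A \times_{\alpha^\eta} G}(r)$ by the crossed-product inner product formula, and the outer integral against $v^0_r$ is precisely $(\pi_0 \times v^0)$ applied to that convolution function. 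This yields the right-hand side.

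The main obstacle I anticipate is purely analytic bookkeeping rather than conceptual: justifying the interchange of the order of integration (Fubini) and of integration with the inner product and with $\pi_0$, and handling the change of variables with the correct Haar-measure/modular-function factors. These are manageable because $l, m$ have compact support and all maps in sight ($t \mapsto v^0_t$, $t \mapsto \Psi_0(l(t))$, $\pi_0$, the inner products) are continuous in the relevant topologies and the operators involved are bounded (indeed $v^0_t$ is unitary), so the integrals are genuine Bochner-type integrals over compact sets and all the manipulations are valid; one should double-check that the module-action and inner-product conventions of Hilbert $C^*$-modules (linearity in the second variable here) match the convolution formulas as written. Once the displayed identity is verified, $\Psi_0 \times v^0$ satisfies Definition \ref{def1.1} with the $*$-homomorphism $\pi_0 \times v^0$ associated to it, which is exactly the assertion of the lemma, and simultaneously establishes the "moreover" claim that the integrated form of $(\pi_0, v^0, F_0)$ is the associated representation.
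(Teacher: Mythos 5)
Your proof is correct and follows essentially the same route as the paper: both reduce the lemma to the single inner-product identity of Definition \ref{def1.1} and verify it by expanding into a double integral, invoking the quasi-representation property of $\Psi_0$, the covariance of $(\pi_0,v^0)$, a left-translation change of variables, and Fubini (the paper simply runs the computation from the $(\pi_0\times v^0)(\langle l,m\rangle)$ side toward the $(\Psi_0\times v^0)$ side, whereas you go in the opposite direction). The only item the paper records that you omit is the compatibility $(\Psi_0\times v^0)(lg)=(\Psi_0\times v^0)(l)(\pi_0\times v^0)(g)$, which is not required by Definition \ref{def1.1}, so nothing essential is missing.
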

\begin{proof}
For $l\in C_c(G,E)$ and $g\in C_c(G,\m A)$, we get
\begin{align*}
(\Psi_0\times v^0)(lg)&=\int_{G}\int_{G}\Psi_0(l(t)\alpha^{\eta}_{t}
(g(t^{-1}s))v^0_{s}ds dt
\\&=\int_{G}\int_{G}\Psi_0(l(t))\pi_0(\alpha^{\eta}_{t}
(g(t^{-1}s))v^0_{s}ds dt\\
&=\int_{G}\int_{G}\Psi_0(l(t))v^0_{t}\pi_0
(g(t^{-1}s)v^{0*}_{t}v^0_{s}ds dt \\
&=(\Psi_0\times v^0)(l)(\pi_0\times v^0)(g).
\end{align*}
For $l,m\in C_c(G,E)$ and $f_0,f'_0\in F_0$ we have
\begin{align*}
\langle(\pi_0\times v^0)(\langle l,m\rangle)f_0,f'_0\rangle
&=\left< \int_{G}\pi_0(\langle l,m\rangle(s))v^{0}_{s}f_0 ds,f'_0
\right>
\\
&= \left< \int_{G}\int_{G}v^{0*}_t\pi_0 (\langle
l(t),m(ts)\rangle)v^{0}_{ts} f_0 dt ds,  f'_0 \right> \\&= \int_{G}\int_{G}
\langle \Psi_0(m(ts))v^0_{ts}f_0,\Psi_0(l(t))v^0_t f'_0\rangle dt ds
\\
&=  \left< \int_{G}\Psi_0(m(s))v^0_{s}f_0 ds,\int_{G}\Psi_0(l(t))v^0_t
f'_0dt\right> 
\end{align*}
\begin{align*}
&= \langle(\Psi_0\times v^0)(m)f_0,(\Psi_0\times
v^0)(l)f'_0\rangle.\qedhere
\end{align*}
\end{proof}

\begin{definition}(cf. \cite{Jo11})
 Let $G$ be a locally compact group with the modular function
$\bigtriangleup$. Let $u:G\to \m U \m B$ and $u':G\to \m U \m C$ be unitary representations of $G$ on unital 
 (pre-)$C^*$-algebras $\m B$ and $\m C$, respectively. Let $ F$ be a
(pre-)$C^*$-correspondence from $\m C$ to $\m B$ and let $(G,\eta,E)$ be a
dynamical system on a Hilbert $\m A$-module $E$, where $\m A$ is a unital $C^*$-algebra. A $\tau$-map, $T:E\times_{\eta} G\to  F$, is
called {\rm $(u',u)$-covariant} if
\begin{enumerate}
 \item [(a)] $T(\eta_t \circ m^{l}_t)=u'_t
 T(m)$ where
 $m^{l}_t(s)=m(t^{-1} s)$ for all $s,t\in G$, $m\in C_c(G,E)$;
 \item [(b)] $T(m^{r}_t)=T(m)u_t$ where
 $m^{r}_t (s)=\bigtriangleup(t)^{-1}m(s t^{-1})$ for all $s,t\in G$, $m\in C_c(G,E)$.
\end{enumerate}

\end{definition}

\begin{proposition}\label{prop3.3}
Let $\m B$ be a von
Neumann algebra acting on a Hilbert space $\m H$, $\m C$ be a unital $C^*$-algebra, and let $ F$ be a von Neumann $\m C$-$\m B$ module.
Let $(G,\eta,E)$ be
a dynamical system on a full Hilbert $\m A$-module $E$, where $\m A$ is a unital $C^*$-algebra and $G$ is a locally compact group. 
Let $u:G\to \m U \m B$,  $u':G\to \m U \m C$
be unitary representations  and let $\tau:\m A\to \m B$
be a completely positive map. If $T:E\to F$ is a $\tau$-map which is $(u',u)$-covariant
 with respect to $(G,\eta,E)$, then there exist a
completely positive map $\widetilde{\tau}:\m A\times_{\alpha^{\eta}}G\to \m B$ and a $(u',u)$-covariant map $\widetilde{T}:E\times_{\eta} G\to
F$ which is  a $\widetilde{\tau}$-map. Indeed, $\widetilde{T}$ satisfies
$$\widetilde{T}(l)=\int_G T(l(s))u_s ds~\mbox{for all}~l\in C_c (G,E).$$
\end{proposition}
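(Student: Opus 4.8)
The plan is to use the covariant Stinespring type theorem (Theorem \ref{prop1.6}) to dilate the given covariant $\tau$-map $T\colon E\to F$, then integrate the pieces over $G$ and repackage. First I would apply Theorem \ref{prop1.6} to $T$: this yields a von Neumann $\m B$-module $F_1$ with a covariant representation $(\pi,v)$ of $(G,\alpha^{\eta},\m A)$, a map $V\in\ma B^a(\m B,F_1)$ with $\tau(a)b=V^*\pi(a)Vb$ and $v_tVb=Vu_tb$, a von Neumann $\m B$-module $F'$, a covariant quasi-representation $(\Psi,v,w,F_1,F')$ with $\pi$ associated to $\Psi$, and a coisometry $S\colon F\to F'$ with $T(x)b=S^*\Psi(x)Vb$ and $w_tSy=Su'_ty$. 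Since the proof of Theorem \ref{prop1.6} first produces the \emph{pre}-versions $(\pi_0,v^0,F_0)$ and $(\Psi_0,v^0,w^0,F_0,F'')$ before taking strong-operator closures, I would work at the level of these pre-objects where the integrals $\int_G\Psi_0(l(t))v^0_t\,dt$ make literal sense on $C_c(G,E)$.

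Next I would invoke Lemma \ref{lem1.3}: applied to the covariant quasi-representation $(\Psi_0,v^0,w^0,F_0,F'')$, it gives that $\Psi_0\times v^0$ is a quasi-representation of the crossed product $E\times_\eta G$ on $F_0$ and $F''$, with the integrated form $\pi_0\times v^0$ of $(\pi_0,v^0,F_0)$ associated to it; the latter is a $*$-homomorphism $\widetilde\pi_0\colon\m A\times_{\alpha^\eta}G\to\ma B^a(F_0)$ which extends (after sot-closure) to $\widetilde\pi$ on $F_1$. Now define $\widetilde T\colon E\times_\eta G\to F$ by
\[
\widetilde T(l):=S^*(\Psi_0\times v^0)(l)\,V=\int_G S^*\Psi_0(l(s))v^0_s V\,ds=\int_G S^*\Psi(l(s))Vu_s\,ds=\int_G T(l(s))u_s\,ds,
\]
using $v^0_sV=v_sV=Vu_s$ and $S^*\Psi(x)V=T(x)$; this is exactly the claimed formula, and since $T$ is norm-continuous and $C_c(G,E)$ is dense in $E\times_\eta G$ this extends boundedly. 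Define $\widetilde\tau\colon\m A\times_{\alpha^\eta}G\to\m B$ by $\widetilde\tau(g):=V^*\widetilde\pi(g)V$, the integrated form of the $u$-covariant completely positive map $\tau$ in the sense of the standard crossed-product construction; $\widetilde\tau$ is completely positive because $\widetilde\pi$ is a $*$-homomorphism and $\widetilde\tau(g)b=V^*\widetilde\pi(g)Vb$ is visibly of Stinespring form. Then for $l,m\in C_c(G,E)$,
\[
\langle\widetilde T(l),\widetilde T(m)\rangle=V^*(\Psi_0\times v^0)(l)^*S S^*(\Psi_0\times v^0)(m)V=V^*\widetilde\pi_0(\langle l,m\rangle)V=\widetilde\tau(\langle l,m\rangle),
\]
where the middle equality uses that $S$ is a coisometry so $SS^*=1_{F'}$ restricted appropriately, together with the quasi-representation identity $\langle(\Psi_0\times v^0)(m)f,(\Psi_0\times v^0)(l)f'\rangle=\langle(\pi_0\times v^0)(\langle l,m\rangle)f,f'\rangle$ from Lemma \ref{lem1.3}. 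Hence $\widetilde T$ is a $\widetilde\tau$-map.

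It remains to verify the two covariance conditions (a) and (b) of the definition of a $(u',u)$-covariant $\tau$-map on the crossed product. For (a), I would compute $\widetilde T(\eta_t\circ m^l_t)=\int_G T(\eta_t(m(t^{-1}s)))u_s\,ds$ and, after the substitution $s\mapsto ts$ and using $T(\eta_t(x))=u'_tT(x)u^*_t$ together with $u_{ts}=u_tu_s$, obtain $u'_t\int_G T(m(s))u_s\,ds=u'_t\widetilde T(m)$. For (b), $\widetilde T(m^r_t)=\int_G\triangle(t)^{-1}T(m(st^{-1}))u_s\,ds$; substituting $s\mapsto st$ produces the Jacobian factor $\triangle(t)$ which cancels $\triangle(t)^{-1}$, and $u_{st}=u_su_t$ gives $\widetilde T(m)u_t$. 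I expect the main obstacle to be purely bookkeeping: making precise that the sot-closures taken in Theorem \ref{prop1.6} are compatible with the integrals in Lemma \ref{lem1.3} (i.e. that it suffices to define $\widetilde T$ and check the $\widetilde\tau$-map identity on $C_c(G,E)$ at the pre-Hilbert-module level, then extend), and correctly tracking the modular function and the left/right actions in the crossed-product inner product so that the identification $V^*\widetilde\pi_0(\langle l,m\rangle)V=\widetilde\tau(\langle l,m\rangle)$ is literally the integrated-form completely positive map; neither is conceptually hard, but both require care with conventions from \cite{EKQR00} and \cite{Jo11}.
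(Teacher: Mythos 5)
Your proposal is correct and follows essentially the same route as the paper: apply Theorem \ref{prop1.6} to get the pre-level covariant construction $(\Psi_0,\pi_0,v^0,w^0,V,S,F_0,F'')$, set $\widetilde{T}(l):=S^*(\Psi_0\times v^0)(l)V$, identify $\widetilde{\tau}$ with the integrated form $V^*(\pi_0\times v^0)(\cdot)V$, and use Lemma \ref{lem1.3} together with $SS^*=1$ on the relevant submodule for the $\widetilde{\tau}$-map identity. Your covariance check, done directly from $\widetilde{T}(l)=\int_G T(l(s))u_s\,ds$ by change of variables, is a harmless (and slightly cleaner) variant of the paper's computation via the covariance of $\Psi_0$.
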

\begin{proof}
By Theorem \ref{prop1.6} there exists the Stinespring type construction $(\Psi,\pi,v,w,V,$ $S,F,F')$, associated to $T$, based on the construction 
$(\Psi_0,\pi_0,v^0,T,F_0, F'')$.
Define a map $\widetilde{T}:E\times_{\eta} G\to
 F$ by
\[
\widetilde{T}(l):=S^{*}(\Psi_0\times v^0)(l)V,~\mbox{for all}~l\in
C_c(G,E).
\]
Indeed, for all $l\in C_c (G,E)$ we obtain
\begin{align*}\widetilde{T}(l)&=S^{*}(\Psi_0\times v^0)(l)V=S^{*}\int_{G}\Psi_0(l(s))v^0_s
dsV=\int_{G}S^{*}\Psi_0(l(s))V u_s
ds
\\ &=\int_G T(l(s))u_s ds.
\end{align*}
It is clear that $(\pi_0\times v^0,V,F_0)$ is the Stinespring triple (cf. Theorem \ref{prop1.3})
associated to the completely positive map $\widetilde{\tau}:\m A\times_{\alpha^{\eta}}G\to \m B$
defined by \[\widetilde{\tau}(h):=\int_{G}
\tau(f(t))v^0_{t}dt~\mbox{for all}~ f\in C_c (G,\m A);
b,b'\in \m B.
\]
We have
\begin{eqnarray*}
\langle \widetilde{T}(l),\widetilde{T}(m)\rangle b &=&\langle
S^{*}(\Psi_0\times v^0)(m)V,S^{*}(\Psi_0\times v^0)(l)V\rangle
b
=\widetilde{\tau}(\langle l,m\rangle)b
\end{eqnarray*}
for all $l,m\in E\times_{\eta} G,~b\in\m B.$ Hence $\widetilde{T}$ is a
$\widetilde{\tau}$-map. Further,
\begin{align*}
\widetilde{T}(\eta_t \circ m^{l}_t)&=S^*\int_G
\Psi_0(\eta_t(m(t^{-1} s)))v^0_s ds V=S^*\int_G w^0_t
\Psi_0(m(t^{-1} s))v^0_{t^{-1} s} ds V
\\ &= u'_t
 \widetilde{T}(m);\\
\widetilde{T}(m^{r}_t)&=S^*\int_G \bigtriangleup
(t)^{-1}\Psi_0(m(s t^{-1}))v^0_s ds V= S^*\int_G
\Psi_0(m(g))v^0_g v^0_t dg V\\ &=\widetilde{T}(m)u_t~\mbox{where $t\in G$, $m\in C_c(G,E)$.}~
\end{align*}

\end{proof}
Proposition \ref{prop3.3} gives us a map $T\mapsto \widetilde{T}$ where $T:E\to F$ is a $\tau$-map which is $(u',u)$-covariant
 with respect to $(G,\eta,E)$ and $\widetilde{T}:E\times_{\eta} G\to
F$ is $(u',u)$-covariant $\widetilde{\tau}$-map such that $\tau$ and $\widetilde{\tau}$ are completely positive. This map is actually 
a one-to-one correspondence. To prove this result we need the following terminologies:

We identify $M(\m A)$ with $\ma B^a (\m A)$ (cf. Theorem
2.2 of \cite{La95}), here $\m A$ is considered as a Hilbert $\m
A$-module in the natural way. 
The {\it strict topology} on $\ma B^a ({E})$ is the topology given
by the seminorms $a\mapsto \|ax\| $, $a\mapsto \|a^*y\| $ for each
$x,y\in E$. For each $C^*$-dynamical system $(G,\alpha,\m A)$ we get a non-degenerate
faithful homomorphism $i_{\m A}:\m A\to M(\m A\times_{\alpha} G)$ and an injective strictly continuous homomorphism $i_G : G\to \m
UM(\m A \times_{\alpha} G)$ (cf. Proposition 2.34 of \cite{Wi07})
defined by
\[
 i_{\m A}(a)(f)(s):=af(s)~\mbox{for}~a\in\m A,~s\in G,~f\in C_c (G,\m A);
\]
\[
 i_{G} (r) f(s):=\alpha_r (f(r^{-1} s))~\mbox{for}~r,s\in G,~f\in C_c (G,\m A).
\]
Let $E$ be a Hilbert $C^*$-module over a $C^*$-algebra $\m A$.
 Define the {\it multiplier module}
$M(E):=\ma B^a (\m A,E)$. $M(E)$ is a Hilbert $C^*$-module over
$M(\m A)$ (cf. Proposition 1.2 of \cite{RT03}). For a dynamical
system $(G,\eta,E)$ on $E$ we get a
non-degenerate morphism of modules $i_{E}$ from $E$ to
$M(E\times_{\eta} G)$ (cf. Theorem 3.5 of \cite{Jo12b}) as follows:
For each $x\in E$ define $i_{E}(x):C_c(G,\m A)\to C_c(G,E)$ by
$$ i_{E}(x)(f)(s):=xf(s)~\mbox{for all}~f\in C_c(G,\m A),~s\in G.$$
Note that $i_{E}$ is an $i_{\m A}$-map.

\begin{theorem}
 Let $\m A$, $\m C$ be unital $C^*$-algebras, and let $\m B$ be a von Neumann algebra acting on a Hilbert space $\m
H$. Let $u:G\to \m U \m B$, $u':G\to \m U \m C$ be unitary representations of a locally compact group $G$. If $(G,\eta,E)$ is a dynamical system 
on a full Hilbert $\m A$-module $E$, 
and if $ F$ is a von Neumann $\m C$-$\m B$
module, then there
exists a bijective correspondence $\mf{I}$ from the set of all  $\tau$-maps, $T:E\to F$, which are
$(u',u)$-covariant  with respect to
$(G,\eta,E)$ onto the set of all maps $\widetilde{T}:E\times_{\eta} G\to F$ which are $(u',u)$-covariant
$\widetilde{\tau}$-maps  such that $\tau:\m A\to \m B$ and $\widetilde{\tau}:\m
A\times_{\alpha^{\eta}}G\to \m B$ are completely positive maps.
\end{theorem}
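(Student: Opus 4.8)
The plan is to construct the map $\mf{I}$ explicitly as $T\mapsto \widetilde{T}$ via Proposition \ref{prop3.3}, and then to construct an inverse map $\widetilde{T}\mapsto T$ by precomposing with the canonical embedding $i_E:E\to M(E\times_\eta G)$, i.e.\ setting $T:=\widetilde{T}\circ i_E$ in the appropriate sense. First I would check that $\mf{I}$ is well-defined: given a $(u',u)$-covariant $\tau$-map $T$ with $\tau$ completely positive, Proposition \ref{prop3.3} already produces a $(u',u)$-covariant $\widetilde\tau$-map $\widetilde T:E\times_\eta G\to F$ with $\widetilde\tau$ completely positive, so nothing new is needed here except to note the formula $\widetilde T(l)=\int_G T(l(s))u_s\,ds$ for $l\in C_c(G,E)$.

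Next I would define the candidate inverse. Since $i_E$ is an $i_{\m A}$-map (noted just before the statement) and $i_{\m A}:\m A\to M(\m A\times_{\alpha^\eta}G)$ is a homomorphism, for a $(u',u)$-covariant $\widetilde\tau$-map $\widetilde T$ one sets $T(x):=\overline{\widetilde T}(i_E(x))$, where $\overline{\widetilde T}$ is the canonical extension of $\widetilde T$ to the multiplier module $M(E\times_\eta G)$ (this extension exists because $\widetilde T$ is a morphism up to the completely positive map $\widetilde\tau$, which extends to multiplier algebras; one uses strict continuity and non-degeneracy, as for the extension of completely positive maps to multiplier algebras). Then $\langle T(x),T(y)\rangle = \overline{\widetilde\tau}(i_{\m A}(\langle x,y\rangle))=:\tau(\langle x,y\rangle)$, so $T$ is a $\tau$-map; $\tau$ is completely positive as the restriction of a completely positive map along a homomorphism. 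The covariance identities $T(\eta_t(x))=u'_tT(x)u_t^*$ follow from the two covariance conditions (a),(b) in the definition of a $(u',u)$-covariant map on the crossed product together with the standard relations $i_E(\eta_t(x))$-vs-$i_E(x)$ intertwined by $i_G$ and $i_G'$ (the multiplier-level unitaries), exactly the computation used in \cite{Jo11}.

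The substantive step is showing these two assignments are mutually inverse. In one direction, starting from $T$, forming $\widetilde T$, then restricting along $i_E$, one must recover $T$: this amounts to the identity $\overline{\widetilde T}(i_E(x))=T(x)$, which unwinds using $\widetilde T(l)=\int_G T(l(s))u_s\,ds$ and an approximate-identity argument in $C_c(G,\m A)$, pushing $i_E(x)(f)(s)=xf(s)$ through the integral and using $\int_G f(s)u_s\,ds\to 1$ strictly along a suitable net $f=f_i$; the covariance of $T$ is not even needed here, only that $\widetilde T$ is built from $T$ by that formula. In the other direction, starting from $\widetilde T$, restricting to get $T$, then re-integrating: one must show $\int_G T(l(s))u_s\,ds=\widetilde T(l)$ for all $l\in C_c(G,E)$. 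This is where the covariance of $\widetilde T$ is essential: writing $l$ as a limit of sums of terms of the form $(\eta_t\circ m^l_t)\cdot$(something) and $m^r_t$, one uses conditions (a) and (b) to move between $\widetilde T$ evaluated on $l$ and on translates, matching the integral formula; concretely one checks the two sides agree on a dense set built from $i_E(E)\cdot C_c(G,\m A)$ and uses that both are bounded and (strictly/strongly) continuous. I expect this last matching — verifying that the inverse construction, when re-integrated, reproduces $\widetilde T$ on all of $C_c(G,E)$ rather than merely on a dense submodule — to be the main obstacle, since it requires carefully combining the covariance relations with the density of $i_E(E)\,C_c(G,\m A)$ in $E\times_\eta G$ and a strict-continuity argument for $\overline{\widetilde T}$; the rest is bookkeeping parallel to \cite{BRS12} and \cite{Jo11}.
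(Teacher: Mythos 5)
Your overall strategy is the same as the paper's: well-definedness of $\mf I$ via Proposition \ref{prop3.3}, and an inverse obtained by passing to the multiplier module $M(E\times_{\eta}G)$ and pulling back along $i_E$. The genuine gap is the step you treat as routine: the existence of a ``canonical extension'' $\overline{\widetilde T}$ of $\widetilde T$ to $M(E\times_{\eta}G)$. This is not an off-the-shelf fact. A $\tau$-map is not a $*$-homomorphism, $\m A\times_{\alpha^{\eta}}G$ is not unital, and even granting that $\widetilde\tau$ extends to $M(\m A\times_{\alpha^{\eta}}G)$ (which itself needs strictness or the von Neumann structure of $\m B$), that does not by itself produce an extension of the map $\widetilde T$. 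The paper never extends the $\tau$-map directly: it first forms the Stinespring data $(\Psi_0,\pi_0,V,S,F_0,F'')$ of $\widetilde T$, extends those to multipliers by $\overline{\pi_0}(a)x:=\lim_i\pi_0(ae_i)x$ and $\overline{\Psi_0}(h)x:=\lim_i\Psi_0(he_i)x$ (the limits exist because $F_0=[\pi_0(\m A\times_{\alpha^{\eta}}G)V\m B]$, i.e.\ by nondegeneracy of $\pi_0$, not of $\widetilde T$), and only then defines the restricted map by $T_0(x)b:=S^*\overline{\Psi_0}(i_E(x))Vb$ and $\tau_0(a):=V^*\overline{\pi_0}(i_{\m A}(a))V$. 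To make your $T:=\overline{\widetilde T}\circ i_E$ meaningful you must either prove a separate lemma that covariant $\tau$-maps extend strictly to multiplier modules, or reproduce exactly this detour through the dilation; as written, your inverse map is not yet defined.

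Two smaller points. First, the re-integration identity $\int_G T_0(l(s))u_s\,ds=\widetilde T(l)$ is indeed where the work lies, but in the paper it is carried by the identity $\widetilde{\Psi_0}\times v^0=\Psi_0$, verified on elementary tensors $x\otimes f$ using the factorization $x=y\langle y,y\rangle$ and the compatibility of $\overline{\Psi_0}$ with $i_E$, $i_{\m A}$, $i_G$; the covariance hypotheses (a) and (b) enter through the intertwining relations $v_tV=Vu_t$ and $w_tS=Su'_t$ (needed both here and to show that the restricted map $T_0$ is $(u',u)$-covariant), none of which your sketch establishes. Second, the paper proves injectivity not by checking that the two assignments are mutually inverse, but by showing that any covariant $\tau_1$-map $T_1$ with $\widetilde{T_1}=T$ has Stinespring data unitarily equivalent to that of $T$, whence $T_1=T_0$; your more direct route would also work, but only after the extension problem above is resolved.
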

\begin{proof}
Proposition \ref{prop3.3} ensures that the map $\mf{I}$ exists and is well-defined. Let $T:E\times_\eta G\to  F$ be a $(u',u)$-covariant
$\tau$-map, where $\tau:\m
A\times_{\alpha^{\eta}}G\to \m B$ is a completely positive map. Suppose $(\Psi_0,\pi_0,V,F_0, F'')$ and $(\Psi,\pi,V,S,F,
F')$ are the Stinespring type constructions associated to $T$ as in the proof of Theorem
\ref{prop1.3}. Let $\{e_i\}_{i\in \m I}$ be an approximate identity
for $\m A\times_{\alpha^{\eta}} G$. Then there exists
a representation $\overline{\pi_0}:M(\m A\times_{\alpha^{\eta}} G)\to \ma B^a (F_0)$
(cf. Proposition 2.39 of \cite{Wi07}) defined by
\[
 \overline{\pi_0}(a)x:=\displaystyle\lim_i \pi_0(a e_i)x~\mbox{for all $a\in M(\m A\times_{\alpha^{\eta}} G)$ and $x\in F_0$.}~
\]
A mapping $\overline{\Psi_0}:M(E \times_{\eta} G)\to \ma B^r(F_0, F'')$ defined by
\[
\overline{\Psi_0}(h)x:=\lim_i \Psi_0(h e_i)x~\mbox{for all $h\in M(E
\times_{\eta} G)$ and $x\in F_0$,}
\]
is a quasi representation and $\overline{\pi_0}$ is associated
to $\overline{\Psi_0}$. If
$\widetilde{\pi_0}:=\overline{\pi_0}\circ i_{\m A}$, then we further get a quasi-representation
$\widetilde{\Psi_0}:E \to \ma B^r(F_0, F'')$
defined as $\widetilde{\Psi_0}:=\overline{\Psi_0}\circ i_{E}$ such that $\widetilde{\pi_0}$ is associated to $\widetilde{\Psi_0}$. Define maps
$T_0:E\to F$ and $\tau_0:\m A\to \m B$ by
\[
 T_0(x)b:=S^*\widetilde{\Psi_0}(x)Vb~\mbox{for}~b\in \m B,~x\in E~{and}
\]
\[
 \tau_0(a):=V^*\widetilde{\pi_0}(a)V~\mbox{for all}~a\in\m A.
\]
It follows that $\tau_0$ is a completely positive map and $T_0$ is a $\tau_0$-map.

Let $v^0:G\to \m U\ma B^a (F_0)$ be a unitary representation
defined by $v^0:=\overline{\pi_0}\circ i_G$ where $$i_G
(t)(f)(s):=\alpha_t (f(t^{-1} s))~\mbox{for all}~t,s\in G,~f\in C_c (G,\m A).$$
Observe that $\widetilde{\pi_0}:\m A\to \ma B^a (F_0)$ is a
$v^0$-covariant and $\widetilde{\pi_0} \times v^0=\pi_0$ (cf.
Proposition 2.39, \cite{Wi07}). We extend $v^0$ to a unitary representation
$v:G\to \m U\ma B^a (F)$ as in the proof of Theorem \ref{prop1.6}. 
It is easy to verify that $$\alpha^\eta_t\circ \langle
m,m'\rangle^l_t=\langle
m^r_{t^{-1}},m'\rangle~\mbox{for all}~m,m'\in C_c (G,E).$$ Using the fact that $T$ is
$(u',u)$-covariant we get
\[
\tau(\alpha^\eta_t\circ \langle
m,m'\rangle^l_t)=\tau(\langle
m^r_{t^{-1}},m'\rangle)=\langle T(m)
u_{t^{-1}} ,T(m')\rangle=u_t \tau (\langle
m,m'\rangle),
\]
$\mbox{for all
$m,m'\in C_c (G,E)$.}$ Therefore we have
\begin{eqnarray*}
 \langle v_t(\pi_0 (f)V b), Vb'\rangle &=&  \langle v^0_t ((\widetilde{\pi_0} \times v^0)(f)V b, Vb'\rangle
 =\left< \int_G \widetilde{\pi_0} (\alpha^\eta_t (f(s))) v^0_{ts} V b ds,Vb'\right>
 \\ &=&\langle (\widetilde{\pi_0}\times v^0) (\alpha^\eta_t \circ f^l_t)V b, Vb'\rangle
  =\langle\tau(\alpha^\eta_t \circ f^l_t)b, b'\rangle\\ &=& \langle(\pi_0(f)V b),Vu_{t^{-1}}b'\rangle
\end{eqnarray*}
for all $t\in G$, $b,b'\in \m B$ and $f\in C_c(G, \m A)$.
This implies that $v_t V=Vu_t$ for each $t\in G$. For each $t\in G$ define $w^0_t
:[\widetilde{\Psi_0}(E)V\m B]\to [\widetilde{\Psi_0}(E)V\m B]$ by
\[
 w^0_t (\widetilde{\Psi_0}(x)Vb):=\widetilde{\Psi_0}(\eta_t(x))Vu_t b~\mbox{for all $x\in E$, $b\in \m B$.}~
\]
Let $t\in G$, $x,y\in E$ and $b,b'\in \m B$. Then
\begin{align*}
 &\langle \widetilde{\Psi_0}(\eta_t (x)) Vu_t b, \widetilde{\Psi_0}(\eta_t (y)) Vu_t b'\rangle
 \\=&\langle \widetilde{\pi_0}(\langle \eta_t (y),\eta_t(x)\rangle)Vu_t b, Vu_t b'\rangle
 =\langle \widetilde{\pi_0}(\alpha^{\eta}_t(\langle  y,x\rangle))Vu_t b, Vu_t b'\rangle
\\ =&\langle v^0_t\widetilde{\pi_0}(\langle  y,x\rangle)v^0_{t^{-1}}Vu_t b, Vu_t b'\rangle=\langle \widetilde{\pi_0}(\langle  y,x\rangle)V b,
V b'\rangle
\\ =& \langle \widetilde{\Psi_0}(x) V b, \widetilde{\Psi_0} (y) V b'\rangle.
\end{align*}  
Indeed, for fix $t\in G$, the continuity of the maps $t\mapsto \eta_t (x)$ and $t\mapsto u_t b$ for $b\in \m B,~x\in E$ provides the 
fact that the map $t\mapsto w^0_t(z)$ is continuous for each $z\in \widetilde{\Psi_0}(E)V\m B$. Therefore $w^0$ is a unitary representation
of $G$ on $[\widetilde{\Psi_0}(E)V\m B]$ and hence it naturally extends to a unitary representation of $G$ on the strong
operator topology closure of $[\widetilde{\Psi_0}(E)V\m B]$ in $\ma
B(\m H,F\bigodot \m H)$, which we denote by
$w$. 

Note that $E\otimes C_c (G)$ is dense in $E\times_{\eta} G$ (cf.
Theorem 3.5 of \cite{Jo12b}). For $x\in E$ and $f\in C_c(G)$ we have
\begin{eqnarray*}
(\widetilde{\Psi_0}\times v^0)(x\otimes f)&=&\int_G
\widetilde{\Psi_0} (x f(t))v^0_t dt =\int_G
\overline{\Psi_0}(i_{E}(x f(t))) \overline{\pi_0}(i_G (t))dt\\
&=& \overline{\Psi_0}(i_{E}(x)\int_G f(t) i_G (t)dt
)=\overline{\Psi_0}(i_{E}(y)i_{\m A}(\langle y,y \rangle)\int_G f(t)
i_G (t)dt)\\ &=& \overline{\Psi_0} (i_{E}(y)(\langle
y,y\rangle\otimes f))=\overline{\Psi_0}(y\langle y,y\rangle\otimes
f)={\Psi_0}(x\otimes f)
\end{eqnarray*}
where $x=y\langle y,y\rangle~\mbox{for some}~y\in E~(\mbox{cf.
Proposition 2.31 \cite{RW98}})$. Also the 3rd last equality follows
from Corollary 2.36 of \cite{Wi07}. This proves $\widetilde{\Psi_0}\times
v^0=\Psi_0$ on $E\times_{\eta} G$. Also for all $m\in
C_c(G,E)$ and $b\in \m B$ we get
\begin{eqnarray*}
 S u'_t (T(m)b) &=& S T(\eta_t \circ m^l_t)b=SS^*\Psi_0 (\eta_t\circ m^l_t)Vb=\Psi_0 (\eta_t\circ m^l_t)Vb
 \\ &=&\int_G \widetilde{\Psi_0}(\eta_t (m(t^{-1}s))) v^0_s V b ds
= w_t\int_G \widetilde{\Psi_0}(m(t^{-1}s)) v^0_{t^{-1}s}V b ds \\ &=& w_t \Psi_0 (m) Vb=w_t ST(m)b.
\end{eqnarray*}
As $T$ is $(u',u)$-covariant, it satisfies $T(\eta_t \circ m^{l}_t)=u'_t
 T(m)$, where
 $m^{l}_t(s)=m(t^{-1} s)$ for all $s,t\in G$, $m\in C_c(G,E)$. Thus the strong operator topology closure of
$[T(E\times_{\eta} G)\m B]$ in $\ma B(\m H, F \bigodot \m H)$, say $F_T$, is invariant under $u'$. This together with the fact that $S$ is
an orthogonal projection onto $F_T$ provides $Su'_tz=w_tSz$ for all $z\in F^{\perp}_T$. 
So we obtain the equality $S u'_t y=w_t S y~\mbox{for all}~y\in  F.$
Hence
\begin{eqnarray*}
 T_0(\eta_t(x))b=S^*\widetilde{\Psi_0}(\eta_t(x))V b=S^* w_t \widetilde{\Psi_0}(x)V  u_{t^{-1}}b=u'_t T_0(x)u^*_t b
\end{eqnarray*}
for all $t\in G$, $x\in E$ and $b\in \m B$. Moreover,
\begin{eqnarray*}
\widetilde{T_0}(m)b&=&S^*\int_G
\widetilde{\Psi_0}(m(t))Vu_t
bdt=S^*\Psi_0(m)Vb=T(m)b
\end{eqnarray*}
for all
$m\in C_c (G,E)$, $b\in \m B$. This gives $\widetilde{T_0}=T$ and proves that the map $\mf{I}$ is onto.

Let $\tau_1:\m A\to\m B$ be a completely positive map and let $T_1:E\to  F$ be a $(u',u)$-covariant $\tau_1$-map satisfying
$\widetilde{T}_1=T$. If $(\Psi_1,\pi_1,V_1,S_1,F_1, F'_1)$ is the $(w_1,v_1)$-covariant Stinespring type construction associated to 
$T_1$ coming from Theorem \ref{prop1.6}, then
we show that $(\Psi_1\times v_1, V_1, S_1,F_1, F'_1)$ is unitarily equivalent to
the Stinespring type construction associated to $T$. Indeed, from Proposition \ref{prop3.3}, 
there exists a decomposition $$\widetilde{T_1}(m)=S^*_1 (\Psi_1\times v_1)(m)V_1~\mbox{for all}~m\in C_c(G,E).$$  
This implies that for all $m,m'\in C_c(G,E)$ we get
\begin{align*}
 \tau(\langle m,m'\rangle)&=\langle T(m),T(m')\rangle=\langle \widetilde{T_1}(m),\widetilde{T_1}(m')\rangle
\\ & =\langle S^*_1 (\Psi\times v_1)(m)V_1,S^*_1 (\Psi\times v_1)(m')V_1\rangle
\\ & =\langle (\pi_1\times v_1)(\langle m,m'\rangle)V_1,V_1\rangle.
 \end{align*}
 $E$ is full gives $E\times_{\eta} G$ is full (cf. the proof of Proposition 3.5, \cite{EKQR00}) and 
hence $\tau(f)=\langle (\pi_1\times v_1)(f)V_1,V_1\rangle$ for all $f\in C_c (G,\m A)$. Using this fact we deduce that 

\begin{align*}
 \langle \pi(f)Vb,\pi(f')Vb'\rangle &=\langle \pi(f^{\prime*}f)Vb,Vb'\rangle=b^*\tau(f^{\prime*}f) b'\\ &=\langle \pi_1\times v_1(f)V_1b,
 \pi_1\times v_1(f')V_1b'\rangle
 \end{align*}
for all $f,f'\in C_c(G,\m A)$ and $b,b'\in \m B$. Thus we get a unitary $U_1:F\to F_1$ defined by
\[
U_1(\pi(f)V b):=\pi_1\times v_1 (f)V_1b ~\mbox{for}~f \in
C_c(G,\m A),~b\in\m B
\]
and which satisfies $V_1=U_1 V$, $\pi_1\times v_1 (f)=U_1\pi(f)U^*_1$ for all $f \in
C_c(G,\m A)$. Another computation 

\begin{align*}
 \| \Psi(m)Vb\|^2 &=\|\langle\Psi(m)Vb,\Psi(m)Vb\rangle\|=\|\langle\pi(\langle m,m\rangle)Vb,Vb\rangle\|=\|b^*\tau(\langle m,m\rangle) b\|
 \\ &= \|b^*\langle \pi_1\times v_1(\langle m,m\rangle)V_1,V_1\rangle b\|=\|\langle\Psi_1\times v_1(m)V_1b,\Psi_1\times v_1(m)V_1b\rangle\|
 \\ &=\| \Psi_1\times v_1(m)V_1b\|^2
\end{align*}
for all $m\in C_c (G,E)$, $b\in \m B$ provides a unitary $U_2:  F'\to F'_1$ defined as
\[
 U_2 (\Psi(m)Vb):=\Psi_1\times v_1 (m)V_1 b~\mbox{for}~m \in C_c(G,E),~b\in\m B.
\]
Further, it satisfies conditions $S_1=U_2S$ and $U_2\Psi(m)=\Psi_1\times v_1(m)U_1$ for all $m\in C_c(G,E)$. This implies 
$U_2\overline{\widetilde{\Psi} \times v}(z')=\overline{\Psi_1 \times v_1}(z')U_1$ for all $z' \in
M(E\times_{\eta} G)$ and so $U_2\widetilde{\Psi}(x)=\Psi_1\times v_1 (x)U_1$ for
all $x\in E$. Using it we have
\begin{eqnarray*}
 T_0(x)=S^*\widetilde{\Psi}(x)V=S^*_1 U_2\widetilde{\Psi}(x)U^*_1V_1 =S^*_1 U_2 U^*_2 (\Psi_1\times v_1)(x)U_1U^*_1V_1  =T_1(x) 
\end{eqnarray*}
for all $x\in E$ and $b\in\m B$. Hence $\mf{I}$ is injective.
\end{proof}

\noindent \textbf{Acknowledgement.} The author would like to
express thanks of gratitude to Santanu Dey for several
discussions. This work was supported by CSIR, India.

\bibliographystyle{amsplain}{
\bibliography{harshbib}}

{\footnotesize Department of Mathematics, Indian Institute of Technology Bombay,}

{\footnotesize Powai, Mumbai-400076,}

{\footnotesize India.}

{\footnotesize e-mail: harsh@math.iitb.ac.in}

\end{document}